\newtheorem{theorem}{Theorem}
\newtheorem{proposition}{Proposition}
\newtheorem{lemma}{Lemma}
\newcommand*\bigcdot{\mathpalette\bigcdot@{.5}}
\newcommand*\bigcdot@[2]{\mathbin{\vcenter{\hbox{\scalebox{#2}{$\m@th#1\bullet$}}}}}
\theoremstyle{remark}
\newtheorem{remark}{Remark}
\DeclareMathOperator{\sgn}{sgn}
\newcommand{\weak}{\Rightarrow}
\newcommand{\drm}{\mathrm d}
\def\sM{\mathtt{M}}
\def\sF{\mathtt{F}}
\def\sL{\mathtt{L}}
\def\sl{\mathtt{l}}
\def\expect{{\mathbb  E}}
\def\Pr{{\mathbb P}}
\def\real{{\mathbb  R}}
\def\nat{{\mathbb  N}}
\def\integer{{\mathbb Z}}
\newcommand\ind[1]{{1}_{\{#1\}}}
\def\invexcl{\rotatebox[origin=c]{180}{!}}
\newcommand{\var}[0]{\operatorname{Var}}
\newcommand{\floor}[1]{\lfloor #1 \rfloor}
\newcommand{\ceil}[1]{\lceil #1 \rceil}
\newcommand{\mymax}[1]{#1_\vee}
\begin{document}

\title{Extreme Values in Closed Networks}

\author{Predrag Jelenkovi\'c}
\address{Department of Electrical Engineering, Columbia University, New York, NY 10025}
\email{predrag@ee.columbia.edu}

\author{Petar Mom\v cilovi\'c}
\address{Department of Industrial and Systems Engineering, Texas A\&M University, College Station, TX 77843}
\email{petar@tamu.edu}

\begin{abstract}
For a widely used hub-and-spoke closed product-form network consisting of an infinite-server node and several single-server queues, we characterize the maximum queue-length distribution in various operational regimes by leveraging a novel probabilistic representation of the joint queue-length distribution and scaling where the number of customers grows. In these limiting regimes, we derive explicit characterizations of the maximum that are asymptotically equivalent to the maximum of independent random variables with the same geometric marginal distribution as queue lengths. 
In particular, when both the number of customers and queues grow, the parameters of the marginal distribution depend on the global characteristics of the network and are explicitly computed from a quadratic equation that arises from the corresponding large-deviation rate functions. Explicit computation of global characteristics of product-form distribution beyond the marginals, e.g., the maximum, appears novel, and our methodology may apply to other global measures of interest. 
\end{abstract}

\keywords{Closed network, product-form solution, extreme values}

\date{\today}

\maketitle

\section{Introduction}

Closed queueing networks have been used to model systems such as communication and computer networks~\cite{Pit79,MMi82,BBK99}, scrip economies~\cite{JSS14}, and shared mobility services such as ride-sharing and car rental~\cite{BFL22,BDL19,BLW22}; see~\cite{ADW13} and references therein. Recently, these models have also been applied in the context of biomolecular networks~\cite{MLH17}, where monomers (customers) stochastically attach to and detach from filaments (single-server queues) from a free monomer pool (infinite-server node) as part of reversible chemical processes; hence, time-reversible Markov chains are natural model choices. 

The motivation for our results arises from recent molecular cable models in single-cell organisms, where multiple filaments (queues) are grouped into a cable, and the distribution of the cable length can be described as the maximum of many single-server queues~\cite{RMJ23,MRR24}. 
In addition to the plausibility arguments, these studies demonstrated experimentally that the mean and variance of the measured cable lengths scale in accordance with the extreme-value Gumbel distribution. 
The model used in ~\cite{RMJ23,MRR24} assumed that monomers attach to filaments from an infinite monomer pool at constant rates $\lambda$ and detach at rates~$\mu$, representing an open queueing network in which individual single-server queues are independent and geometric, with a parameter determined by the infinite queue utilization $\rho=\lambda/\mu$. However, as previously noted, it is well established that cellular filaments and cables are assembled from a finite monomer pool~\cite{MLH17}. This motivates our investigation of the distribution of the maximum and its scaling in a closed queueing network.
Moreover, understanding the maximum queue-length distribution is of general interest for properly sizing service networks, where efficiency and service quality are balanced, since the maximum queue size and delay can substantially exceed those of individual queues. 

From a mathematical perspective, this work complements our previous work in~\cite{JKM22} that focused on identifying different operational regimes of the network in the context of computing the marginal distribution of the single-server queues. 
Although the considered network admits a stationary product-form solution (see~\eqref{eq:prodpi}), this solution provides little qualitative insight into the spectrum of feasible operating regimes. This motivated our probabilistic approach for explicitly evaluating the marginal distribution of individual queues, bypassing the problem of computing the normalization constant (partition function). Indeed, the presence of the normalization constant in~\eqref{eq:prodpi} has a profound impact on the behavior of the marginal distribution: the marginal can be approximately uniform in one regime and geometric in other regimes. Interestingly, in some operating regimes, the distribution of marginal queue lengths is determined by a parameter that can be obtained from an equation that involves large-deviation rate functions.
The explicit nature of our solutions allowed us to precisely characterize different operating regimes, providing new insights into network performance. 

In this paper, we extend the methodology of~\cite{JKM22} to derive an explicit characterization of the distribution of the maximum queue length. 
Moreover, our analysis further indicates that the global structure of the network distribution is asymptotically equivalent to a uniform distribution on a simplex, whose size is determined by the network parameters. In particular, we demonstrate the asymptotic equivalence of the maximum queue to a maximum on a simplex, which further reveals the dependence between queues through known results from the extreme-value theory for independent random variables.
More formally, Theorem~\ref{thm:disc} in Section~\ref{sec:main} establishes that the maximum distribution can be asymptotically equivalent to the maximum of independent random variables sharing the same geometric marginal distribution as the queue lengths. The parameter of this geometric distribution depends on the global characteristics of the network. In the case of Theorem~\ref{thm:disc}, when both the number of customers and queues grow proportionally, the parameter of the geometric distribution is determined as the solution to a quadratic equation arising from the corresponding large-deviation rate functions. Furthermore, the total number of customers across all single-server queues is approximately constant in this regime, implying that the joint distribution of the queues is approximately uniform on a discrete simplex. 
Conversely, when the number of queues is fixed as the number of customers grows, the maximum queue converges in distribution to the maximum on a continuous simplex adjusted for the expected number of customers in the infinite server queue, see Theorem~\ref{thm:cont}. In Section~\ref{sec:nonhomogen}, we discuss how our method can be extended to the case of a non-homogeneous network with different utilization levels of single-server queues, as stated in Theorems~\ref{thm:3} and~\ref{thm:4}. We omit rigorous proofs of these two results due to tedious computations. 
Finally, beyond the analysis of the maximum, our methodology can be applied to computing the distribution of other global network measures, such as percentiles and order statistics, as discussed in Section~\ref{sec:cr}.


The paper is organized as follows. In the next section, we describe our model and analysis approach. We state results on vectors uniformly distributed on simplexes in Section~\ref{sec:simplex}. Section~\ref{sec:main} contains our main results. We discuss the case of a non-homogeneous network in Section~\ref{sec:nonhomogen}. Concluding remarks and technical proofs can be found in Section~\ref{sec:cr} and Section~\ref{sec:proofs}, respectively.

\section{Model \& Approach}
\label{sec:model}

We consider a closed network model with $m$ single-class customers that consists of an infinite-server node and $n$ infinite-buffer single-server queues. The service times at the infinite-server node are exponentially distributed with rate~$\lambda$. Once a customer completes the service at this node, it is routed to the single-server queue~$i$ with probability~$p_i>0$, where $\sum_{i=1}^n p_i = 1$. At queue~$i$, service times are also exponentially distributed, but with rate~$\mu_i$; all service times across the queues and time are independent. After completing service at a single-server queue, a customer rejoins the infinite-server node. We consider the case of homogeneous queues: all single-server queues have the same utilization, which is proportional to $\kappa^{-1} = p_i \lambda/\mu_i$ for all~$i$; the non-homogeneous case is discussed in Section~\ref{sec:nonhomogen}. Let $\pi(l)$, $l=(l_{1},\ldots,l_{n}) \in \integer_0^n:=\{0,1,\ldots\}^n$, be the steady-state probability that lengths of queues $1,\ldots,n$ are equal to $l_{1},\ldots,l_{n}$, respectively. Then, it is well known that $\pi$ is product-form and satisfies (e.g., see~\cite{Kog95})
\begin{equation}
\pi(l) = c_{m,n}^{-1} \, m! \frac{ \kappa^{-|l|}}{(m- |l|)!}, \label{eq:prodpi}
\end{equation}
where $c_{m,n}^{-1} = \pi(0)$ is a normalization constant (partition function) and $|l|:= l_1+\cdots+l_n \leq m$. Let $L=(L_1,\ldots,L_n)$ be a random vector distributed according to $\pi$; set $M :=m-|L|$. In steady state, $L_i$ is the random length of the $i$th single-server queue, and $M$ is the random number of customers in the infinite-server node. The operational regime of such a network is determined by the relation between the number of customers~$m$, the number of queues~$n$, and the parameter~$\kappa$. As shown in~\cite{ADW13}, the set of bottleneck nodes plays a key role in the network behavior. Informally, the bottleneck nodes are idle with negligible probabilities, and they can be ``replaced" with Poisson sources, whereas the rest of the network can be viewed as a product-form open network. In our homogeneous setting, all queues either act as bottlenecks or none do. The latter occurs when the number of customers is too small to fully utilize the queues, making the customer pool a bottleneck resource.

Instead of analyzing equation~\eqref{eq:prodpi} directly or considering its transforms (as in~\cite{CLW95b,Kog95,KnT90,KBD21}), we utilize a probabilistic representation~\cite{MLH17} of the stationary distributions of various network quantities and concentration bounds. To this end, let~$\sM$ be a $\kappa$-mean Poisson random variable. Then, it was shown~\cite{JKM22} that $c_{m,n}$ satisfies
\begin{equation}
c_{m,n} = \frac{\expect[(m+n-1-\sM)_{n-1} \ind{\sM \leq m}]}{(n-1)! \,\Pr[\sM=m]}, \label{eq:cmn}
\end{equation}
where $\ind{\cdot}$ is the standard indicator function and $(n)_k$ denotes the falling factorial:
\[
(n)_k : = n(n-1) \cdots (n-k+1).
\]
The joint distribution of stationary single-server queue lengths can also be derived in terms of the Poisson variable~$\sM$. In particular, let $x\leq y$ denote $\{x_1 \leq y_1,\cdots,x_n \leq y_n\}$, for $x,y \in \real^n$. Then, \eqref{eq:prodpi} and~\eqref{eq:cmn} yield, for $l \in \real_0^n$,
\begin{align}
    \Pr[L \leq l] &= c_{m,n}^{-1} \frac{1}{\Pr[\sM=m]} \sum_{j=0}^m \sum_{x \in\integer_0^n: \, |x| = m-j } \ind{x \leq l} \Pr[\sM = j] \notag \\
    &= c_{m,n}^{-1} \frac{1}{\Pr[\sM=m]} \sum_{j=0}^m \sum_{x \in\integer_0^n: \, |x| = m-j } \frac{\binom{n+m-j-1}{n-1}}{\binom{n+m-j-1}{n-1}} \ind{x \leq l} \Pr[\sM = j] \notag \\
    &= \frac{\expect[(m+n-1-\sM)_{n-1} \ind{\sM \leq m} \ind{X(m-\sM) \leq l}]}{\expect[(m+n-1-\sM)_{n-1} \ind{\sM \leq m}]}, \label{eq:cdfprobrep}
\end{align}
where, conditional on $\{\sM=m-k\}$, $X(k) \in \integer_0^n:=\{0,1,\ldots\}$ is uniformly distributed on a simplex $\Delta^n_k :=\{x\in\integer_0^n:\, |x|=k\}$ of size~$k$. Random vectors uniformly distributed on simplexes play a key role in our analysis. The distribution of the maximum $\mymax{L} = \max_i L_i$ follows from the joint distribution:
\begin{equation}
    \Pr[\mymax{L} > k] 
    = \frac{\expect[(m+n-1-\sM)_{n-1} \ind{\sM \leq m} \ind{\mymax{X}(m-\sM) > k}]}{\expect[(m+n-1-\sM)_{n-1} \ind{\sM \leq m}]}, \label{eq:maxprobrep}
\end{equation}
where $\mymax{X}(m-\sM) := \max_i |X_i(m-\sM)|$ is the maximum element of $X(m-\sM)$.

Throughout the paper, we use the following notation. For a vector $x \in \real^d$, let $|x|=|x_1|+\cdots+|x_d|$ and $\mymax{x} = \max_i |x_i|$ denote the $\ell_1$ and $\ell_\infty$ norms, respectively. For $f,g: \, \real \to \real$, we write $f(x) \sim g(x)$ and $f(x) \gg g(x)$, as $x \to \infty$, to denote $f(x)/g(x) \to 1$ and $g(x)/f(x)\to 0$, as $x \to \infty$, respectively. Finally, define $H_k(n) := \sum_{i=1}^n i^{-k}$.

\section{Preliminary Results: Simplex}
\label{sec:simplex}

The probabilistic representations~\eqref{eq:cdfprobrep} and~\eqref{eq:maxprobrep} involve a random vector $X(m-\sM)$ uniformly distributed on a discrete simplex of random size $m-\sM$, where $\sM$ is a Poisson random variable with mean~$\kappa$. This section presents two results that characterize random vectors uniformly distributed on simplexes. Let $Y \in \integer_0^n$ be a vector with independent geometrically distributed components. Conditioned on the event $\{|Y|=m\}$, the vector~$Y$ is uniformly distributed on the discrete simplex $\Delta^n_m = \{l\in \integer_0^n:\, |l|=m\}$; this conditional distribution does not depend on the parameter of the geometric distribution. The following lemma estimates the probability that $|Y|$ takes a particular value in a (sub-CLT) neighborhood of $\expect |Y|$.

\begin{lemma} \label{lemma:sum}
Let $Y \in \integer^n_0$ be a vector with independent geometrically distributed components: $\Pr[Y_i=j]=(1-p)p^j$, $j \geq 0$, $i=1,\ldots,n$. Then, uniformly for all integer $|k| \ll \sqrt{n} \sigma$, 
    \[
    \sqrt{2\pi n \sigma^2} \, \Pr[|Y| = \floor{n \expect Y_1} + k ] \to 1, 
    \]
    as $\min\{n,n \expect Y_1\} \to \infty$, where $\expect Y_1 = p/(1-p)$ and $\sigma^2 = \var(Y_1) = p/(1-p)^2$.
\end{lemma}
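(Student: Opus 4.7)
The plan is to identify $|Y|$ as a negative binomial random variable and combine Stirling's formula with a careful Taylor expansion around the mean. Since $Y_1,\ldots,Y_n$ are i.i.d.\ geometric, their sum satisfies
\[
\Pr[|Y|=j]=\binom{n+j-1}{j}(1-p)^n p^j,\qquad j\geq 0,
\]
and Stirling's formula $\log k!=k\log k-k+\tfrac12\log(2\pi k)+O(1/k)$ applied to $(n+j-1)!$, $(n-1)!$, and $j!$ yields
\[
\Pr[|Y|=j]=\sqrt{\frac{n}{2\pi j(n+j)}}\,\exp(\mathcal{E}(j))\,(1+o(1)),
\]
where $\mathcal{E}(j):= n\log\tfrac{(n+j)(1-p)}{n}+j\log\tfrac{(n+j)p}{j}$, uniformly as $n,j\to\infty$.

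Setting $\mu_1=p/(1-p)$ and $j=\lfloor n\mu_1\rfloor+k$, direct algebra yields $(n+j)(1-p)/n=1+(1-p)k/n$ and $(n+j)p/j=1-(1-p)k/(n\mu_1+k)$, up to an $O(1/n)$ correction from the floor. I would expand both logarithms in a Taylor series in $k$: the linear-in-$k$ contributions cancel identically, and the quadratic contributions combine, via the identity $1/n+1/(n\mu_1)=1/(np)$, into $-(1-p)^2 k^2/(2np)=-k^2/(2n\sigma^2)$. The higher-order contributions are controlled by noting that the $m$-th term of the first series is $O((1-p)^m|k|^m/n^{m-1})$ and of the second is $O((1-p)^m|k|^m/(n\mu_1)^{m-1})$; under $|k|\leq\varepsilon\sqrt{n}\sigma=\varepsilon\sqrt{np}/(1-p)$, each reduces to $O((np)^{-(m/2-1)})$, so summing over $m\geq 3$ is $o(1)$ provided $np\to\infty$.

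This last fact is implied by the hypotheses: if along some subsequence $np$ stayed bounded, then $p\to 0$ (because $n\to\infty$), whence $n\mu_1\sim np$ would also be bounded, contradicting $n\mu_1\to\infty$. The Stirling prefactor $\sqrt{n/(2\pi j(n+j))}$ evaluated at $j\approx n\mu_1$ simplifies to $\sqrt{(1-p)^2/(2\pi np)}\,(1+o(1))=(2\pi n\sigma^2)^{-1/2}(1+o(1))$, again uniformly for $|k|\ll n\mu_1$. Assembling the pieces,
\[
\Pr[|Y|=\lfloor n\mu_1\rfloor+k]=\frac{1}{\sqrt{2\pi n\sigma^2}}\exp\!\bigl(-k^2/(2n\sigma^2)\bigr)(1+o(1)),
\]
and the hypothesis $|k|\ll\sqrt{n}\sigma$ forces the exponential factor to tend to $1$, giving the claim.

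The main obstacle is maintaining uniformity of the error terms when $p$ is allowed to depend on $n$ and may approach $0$ or $1$; once the auxiliary fact $np\to\infty$ is established, the Stirling and Taylor remainders admit clean $O((np)^{-1/2})$-type bounds, and the rest is careful bookkeeping.
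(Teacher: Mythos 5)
Your proof is correct and follows essentially the same route as the paper's: identify $|Y|$ as a negative binomial, apply Stirling to the $k$-independent part to produce the $(2\pi n\sigma^2)^{-1/2}$ prefactor, and show the $k$-dependent factor tends to $1$ uniformly for $|k|\ll\sqrt{n}\sigma$. The only executional difference is that the paper controls the $k$-dependent factor through the exact telescoping product of consecutive pmf ratios together with the one-line bound $\ln(1+x)\le x$, yielding $0\le \ln(\text{factor})\le |k|(|k|+1)/(2n\sigma^2)$ directly, whereas you Taylor-expand the exponent and must separately verify that the cubic-and-higher remainders are $o(1)$ via $np\to\infty$ --- both work, and your observation that $np\to\infty$ is forced by the hypotheses is the same fact the paper uses implicitly.
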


\begin{proof} See Section~\ref{sec:prelimproof}.  
\end{proof}

The next proposition characterizes $\mymax{X}$ when $X \in \integer^n_0$ is uniformly distributed on a discrete simplex of size~$m$.

\begin{proposition} \label{pr:max_simplex_discrete}
Let $X \in \integer_0^n$ be a random vector uniformly distributed on a simplex of nonnegative integers 
$\Delta^n_m =\{l \in  \integer_0^n: \, |l|=m \}, m\in \nat$.
If $\ln n \gg \ln (1+n/m)$, as $n\to\infty$, then, as $n \to \infty$, 
\[
\frac{\ln(1+n/m)}{\ln n} \mymax{X}
\stackrel{\Pr}{\rightarrow} 1.
\]
\end{proposition}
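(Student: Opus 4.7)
The plan is to bypass Lemma~\ref{lemma:sum} and work directly with combinatorial counts on the simplex. A shift bijection $(x_1,\dots,x_n)\mapsto(x_1-k-1,x_2,\dots,x_n)$ gives
\[
\Pr[X_1\ge k+1] \,=\, \binom{n+m-k-2}{n-1}\Big/\binom{n+m-1}{n-1} \,=\, \prod_{j=0}^{k} \frac{m-j}{m+n-1-j}.
\]
Set $q:=m/(m+n-1)$, so $\ln(1/q)=\ln(1+(n-1)/m)\sim\ln(1+n/m)$, and write $\alpha := \ln(1+n/m)/\ln n$ for the scaling factor appearing in the claim. The factors in the product above are bounded by $q$ and, crucially, are \emph{decreasing} in~$j$ (since $n>1$); this monotonicity immediately yields both $\Pr[X_1\ge k+1]\le q^{k+1}$ and, by a telescoping comparison, the pairwise negative-correlation bound $\Pr[X_1\ge k+1,\,X_2\ge k+1]\le\Pr[X_1\ge k+1]^{2}$.

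For the upper tail I use the union bound $\Pr[\mymax{X}>k]\le n q^{k+1}$. Choosing $k_+ := \ceil{(1+\varepsilon)\ln n/\ln(1+n/m)}$ for arbitrary $\varepsilon>0$ and invoking $\alpha\to 0$ yields $q^{k_+}=n^{-(1+\varepsilon)(1+o(1))}$, hence $n q^{k_++1}\to 0$, so $\Pr[\alpha\mymax{X}>1+\varepsilon]\to 0$.

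For the lower tail I plan a second-moment argument. Letting $N_k := \sum_{i=1}^n \ind{X_i\ge k+1}$, the negative correlation established above implies $\var(N_k)\le\expect N_k$, so Chebyshev's inequality gives $\Pr[\mymax{X}\le k]=\Pr[N_k=0]\le 1/\expect N_k$. Combined with the elementary lower bound $\Pr[X_1\ge k+1]\ge q^{k+1}(1-k/m)^{k+1}$ (obtained by replacing every numerator factor $m-j$ with $m-k$), the choice $k_- := \floor{(1-\varepsilon)\ln n/\ln(1+n/m)}$ reduces the task to showing $n q^{k_-+1}(1-k_-/m)^{k_-+1}\to\infty$.

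The main technical obstacle is the bookkeeping needed to verify the auxiliary condition $k_-^2/m\to 0$ (so that the correction factor $(1-k_-/m)^{k_-+1}\to 1$) together with the divergence of $n q^{k_-+1}$. The hypothesis $\ln n\gg\ln(1+n/m)$ forces $m=n^{1-o(1)}$ and $q=n^{-o(1)}$; a brief case split on whether $n/m$ remains bounded or tends to infinity confirms both $(\ln n)^2/(m\ln^2(1+n/m))\to 0$ and $q\cdot n^{\varepsilon+o(1)}\to\infty$. Combining the two tails then yields $\alpha\mymax{X}\to 1$ in probability, as required.
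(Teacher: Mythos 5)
Your route is genuinely different from the paper's (which represents $X$ as i.i.d.\ geometrics conditioned on their sum and runs everything through a negative-binomial local limit theorem, Lemma~\ref{lemma:sum}); your exact hypergeometric-type product formula and the monotonicity-based negative-correlation inequality $\Pr[X_1\ge k+1,\,X_2\ge k+1]\le\Pr[X_1\ge k+1]^2$ are correct and would eliminate the local CLT entirely. The union-bound upper tail is fine. However, the lower tail as written has a genuine gap. Your per-factor bound $\frac{m-j}{m+n-1-j}\ge q\,(1-k/m)$ forces you to control $(1-k_-/m)^{k_-+1}$, i.e.\ to verify $k_-^2/m\to0$, and this does \emph{not} follow from $\ln n\gg\ln(1+n/m)$. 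That hypothesis only bounds $m$ from \emph{below} (roughly $m\ge n^{1-o(1)}$); it places no upper bound on $m$, so your claim that it forces $m=n^{1-o(1)}$ is false. Take $m=n^3$: then $\ln(1+n/m)\sim n^{-2}\ll\ln n$, so the hypothesis holds, but $k_-\asymp n^2\ln n$ and $k_-^2/m\asymp n\ln^2 n\to\infty$, so your correction factor $(1-k_-/m)^{k_-+1}\approx e^{-n\ln^2 n}$ kills the lower bound $nq^{k_-+1}(1-k_-/m)^{k_-+1}$ even though $\expect N_{k_-}\to\infty$ is in fact true there. (Note the proposition must cover $m\gg n$; this is exactly the regime feeding Theorem~\ref{thm:cont} and the first bullet of Theorem~\ref{thm:disc}.)

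The gap is repairable within your framework: use the monotonicity you already established to write
\[
\Pr[X_1\ge k+1]\;\ge\;\Bigl(\tfrac{m-k}{m+n-1-k}\Bigr)^{k+1},
\]
and then show $\ln\frac{m+n-1-k}{m-k}=\ln\bigl(1+\tfrac{n-1}{m-k}\bigr)\le\ln(1/q)\bigl(1+O(k/m)\bigr)$, which only needs $k_-/m\to0$ (true under the hypothesis in both of your cases), rather than $k_-^2/m\to0$. Concretely, writing $x=(n-1)/m$ and $\delta=\frac{k/m}{1-k/m}$, one has $\ln(1+x(1+\delta))\le\ln(1+x)+\frac{x\delta}{1+x}$ and $\frac{x}{1+x}\le\ln(1+x)/\ln 2$ fails only up to constants, so the multiplicative error in the exponent is $O(\delta)=o(1)$; this yields $\Pr[X_1\ge k_-+1]\ge n^{-(1-\varepsilon)(1+o(1))}$ and hence $\expect N_{k_-}\to\infty$ as you need. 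With that substitution your argument goes through and is, in my view, a cleaner and more elementary proof than the one in Section~\ref{sec:prelimproof}, since it avoids both the Stirling computation of Lemma~\ref{lemma:sum} and the auxiliary truncation level $y_\epsilon$.
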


\begin{remark}
    The condition $\ln n \gg \ln (1+n/m)$ prevents the dimension~$n$ from growing too rapidly relative to the size~$m$. For example, if $n \sim m^\alpha$ with $\alpha>1$, then $\frac{\alpha-1}{\alpha} \mymax{X}$ does not converge in probability to~$1$ because $\frac{\alpha}{\alpha-1}$ is not necessarily integer whereas~$\mymax{X}$ takes integer values only.
\end{remark}

\begin{proof} See Section~\ref{sec:prelimproof}.
\end{proof}

The last result in this section characterizes $\mymax{X}$ when $X \in \real^n_0:=[0,\infty)^n$ is uniform on a continuous unit simplex. In this case, $n \,\expect X_1 = 1$ and $n^2 \,\var(X_1) = \frac{n-1}{n+1}$. Hence, the proposition implies that the coefficient of variation of the maximum~$\mymax{X}$ is lower than that of marginal $X_1$. In particular, the coefficient of variation of~$\mymax{X}$ vanishes as~$n$ increases.

\begin{proposition} \label{pr:max_simplex_cont}
Let $X \in \real_0^n$ be a random vector uniformly distributed on a unit simplex of nonnegative reals 
$\Delta^n=\{x \in  \real_0^n: \, |x|=1 \}$. Then
\[
n\, \expect \mymax{X} = H_1(n) \quad\text{and}\quad n^2\,\var(\mymax{X}) = \frac{n H_2(n) - H^2_1(n)}{n+1}.
\]
Moreover, as $n\to\infty$,
\begin{equation}
\Pr[n\mymax{X} \leq x + \ln n ] \to e^{-e^{-x}}. \label{eq:simplexGumbel}
\end{equation}
\end{proposition}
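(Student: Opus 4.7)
My plan is to exploit the classical spacings representation of the uniform distribution on the simplex. If $U_1,\ldots,U_{n-1}$ are i.i.d.\ uniform on $[0,1]$ with order statistics $0=U_{(0)}\le U_{(1)}\le\cdots\le U_{(n-1)}\le U_{(n)}=1$, then the spacings $(U_{(i)}-U_{(i-1)})_{i=1}^{n}$ are distributed exactly as $X$. The classical identity
\[
\Pr[X_{i_1}>t,\ldots,X_{i_k}>t] = (1-kt)_+^{n-1}
\]
for any $k$-subset of coordinates, combined with inclusion--exclusion, yields
\begin{equation}
\Pr[\mymax{X}>t] \;=\; \sum_{k=1}^{n}(-1)^{k+1}\binom{n}{k}(1-kt)_+^{\,n-1},\qquad t\in[0,1], \label{eq:cdfmax}
\end{equation}
which is the single formula that drives every part of the proposition.

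First I would derive the mean by integrating \eqref{eq:cdfmax}:
\[
\expect\mymax{X} \;=\; \int_0^1\Pr[\mymax{X}>t]\,dt \;=\; \sum_{k=1}^{n}(-1)^{k+1}\binom{n}{k}\frac{1}{kn} \;=\; \frac{H_1(n)}{n},
\]
where the last equality uses the standard identity $\sum_{k=1}^{n}(-1)^{k+1}\binom{n}{k}/k=H_1(n)$ (provable by telescoping or from $\int_0^1(1-(1-t)^n)/t\,dt$). For the second moment, $\expect\mymax{X}^{2}=2\int_0^1 t\,\Pr[\mymax{X}>t]\,dt$, and the substitution $u=kt$ gives the beta integral $\int_0^{1/k} t(1-kt)^{n-1}dt = 1/(k^2 n(n+1))$, so
\[
\expect\mymax{X}^{2} \;=\; \frac{2}{n(n+1)}\sum_{k=1}^{n}(-1)^{k+1}\binom{n}{k}\frac{1}{k^{2}}.
\]
I would then invoke (or derive by differentiating $\int_0^1(1-(1-t)^n)/t\,dt$ once more) the identity
\[
\sum_{k=1}^{n}(-1)^{k+1}\binom{n}{k}\frac{1}{k^{2}} \;=\; \tfrac{1}{2}\bigl(H_1(n)^{2}+H_2(n)\bigr),
\]
from which $n^2\var(\mymax{X}) = (nH_2(n)-H_1^2(n))/(n+1)$ follows after subtracting $(\expect\mymax{X})^2$ and simplifying.

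For the Gumbel limit, I would plug $t=(x+\ln n)/n$ into the CDF obtained from \eqref{eq:cdfmax}:
\[
\Pr\bigl[n\mymax{X}\le x+\ln n\bigr] \;=\; \sum_{k=0}^{n}(-1)^{k}\binom{n}{k}\Bigl(1-\tfrac{k(x+\ln n)}{n}\Bigr)_{+}^{\,n-1}.
\]
For fixed $k$, $\binom{n}{k}\sim n^{k}/k!$ and $(1-k(x+\ln n)/n)^{n-1}\sim e^{-k(x+\ln n)} = e^{-kx}/n^{k}$, so the $k$th summand tends to $(-1)^{k}e^{-kx}/k!$. Summing gives $\sum_{k\ge 0}(-1)^{k}e^{-kx}/k! = e^{-e^{-x}}$, which is \eqref{eq:simplexGumbel}.

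The main technical obstacle is justifying the termwise passage to the limit in the inclusion--exclusion sum, since the number of terms grows with $n$ and they alternate in sign. I would control the tail by pairing consecutive terms and using the monotonicity estimate $\binom{n}{k+1}(1-(k+1)t)^{n-1} \le \binom{n}{k}(1-kt)^{n-1}$ for $t$ in the relevant range, or alternatively invoke Bonferroni-type inequalities to truncate the sum at some slowly growing $K=K(n)$ and bound the remainder uniformly by $e^{-Kx}/K!$. Once dominated convergence over the truncated portion and a vanishing tail estimate are in hand, the Gumbel limit follows, completing the proposition.
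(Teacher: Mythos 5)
Your proposal is correct, and it takes a genuinely different route from the paper. The paper works with the exponential representation $X\stackrel{d}{=}Z/|Z|$ for i.i.d.\ unit-rate exponentials $Z_i$: the R\'enyi decomposition $\mymax{X}\stackrel{d}{=}\sum_i (Z_i/i)/|Z|$ gives the mean in one line by symmetry, the second moment reduces to two beta-type integrals for $\expect[Z_1^2/|Z|^2]$ and $\expect[(Z_1+Z_2)^2/|Z|^2]$, and the Gumbel limit follows by sandwiching $|Z|/n$ around $1$ via the CLT, so that $n\mymax{X}$ inherits the classical extreme-value behavior of $\mymax{Z}$ for independent exponentials. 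You instead start from the exact inclusion--exclusion (Whitworth/Fisher) formula $\Pr[\mymax{X}>t]=\sum_{k=1}^{n}(-1)^{k+1}\binom{n}{k}(1-kt)_+^{n-1}$, which yields a closed-form finite-$n$ CDF as a byproduct; the moments then follow from termwise integration together with the identities $\sum_k(-1)^{k+1}\binom{n}{k}k^{-1}=H_1(n)$ and $\sum_k(-1)^{k+1}\binom{n}{k}k^{-2}=\tfrac12(H_1^2(n)+H_2(n))$ (the second of which you should prove or cite explicitly, e.g.\ via $\sum_{j\le n}H_j/j$), and your variance algebra checks out against the stated formula. For the limit law, your concern about termwise passage in an alternating sum of growing length is the right one, and the Bonferroni truncation you propose (odd/even partial sums bracket the union probability, each fixed partial sum converges as $n\to\infty$, then let the truncation level grow) is a clean and complete fix; the pairing/monotonicity alternative is shakier for very negative $x$ and small $k$, so I would commit to Bonferroni. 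In short, your route is more combinatorial and self-contained and delivers an exact distribution, while the paper's route is more probabilistic and transfers the extreme-value statement directly to independent exponentials.
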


\begin{proof} See Section~\ref{sec:prelimproof}. 
\end{proof}

\section{Main Results}
\label{sec:main}

This section presents the main results in Theorems~\ref{thm:disc} and~\ref{thm:cont}. The proofs of the theorems are based on~\eqref{eq:maxprobrep} and~\eqref{eq:cdfprobrep}, both of which involve the key term: 
\[
\sF_n(m-\sM) := (m+n-1-\sM)_{n-1} \ind{\sM \leq m}. 
\]
The first lemma identifies the values of~$(m-\sM)$ on which the mass in $\expect[\sF_n(m-\sM)]$ is concentrated; these values depend on the network parameters: $n$, $m$ and $\kappa$. Informally, the mass is concentrated on values of $(m-\sM)$ that are away from $(m-\expect \sM)$ in the first two operational regimes described by the lemma, corresponding to equation~\eqref{eq:con100}. This ``shift" $\zeta$ occurs because the falling factorial in $\sF_n(m-\sM)$ is exponentially large in~$n$, which compensates for the exponentially small probability of $\{\sM \leq m\}$.

\begin{lemma}[Concentration] \label{lemma:concentration}
If $\kappa/m \to \bar\kappa \geq 0$, as $m\to\infty$, then, for any $\epsilon>0$, as $m\to\infty$,
\begin{equation}
\frac{\expect[(m+n-1-\sM)_{n-1} \ind{\sM \leq m} \ind{ |(m-\sM)/n - \zeta)| \leq \epsilon} ]}{\expect[(m+n-1-\sM)_{n-1} \ind{\sM \leq m}]} \to 1, \label{eq:con100}
\end{equation}
where
\begin{itemize}
    \item if $\bar\kappa>1$ and $m \gg n \gg 1$, then $\zeta=1/(\bar\kappa-1)$;
    \item if $n/m \to \bar n>0$, then $\zeta$ is the positive root of $\zeta^2 \bar n + \zeta(\bar n + \bar\kappa -1) - 1$.
\end{itemize}
If $\kappa/m \to \bar\kappa < 1$ and $m \gg n$, as $m\to\infty$, then, for any $\epsilon>0$, as $m\to\infty$,
\begin{equation}
\frac{\expect[(m+n-1-\sM)_{n-1} \ind{\sM \leq m} \ind{ |\sM - \kappa| \leq \epsilon m} ]}{\expect[(m+n-1-\sM)_{n-1} \ind{\sM \leq m}]} \to 1. \label{eq:con200}
\end{equation}
\end{lemma}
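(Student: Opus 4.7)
The plan is to represent the denominator in \eqref{eq:con100}--\eqref{eq:con200} as the single sum
\[
\expect[\sF_n(m-\sM)] \;=\; \sum_{j=0}^{m} g_j, \qquad g_j \;:=\; (m+n-1-j)_{n-1}\,e^{-\kappa}\kappa^j/j!,
\]
and to show that the mass of $g_j$ concentrates on a window of width $o(n)$ (respectively $o(m)$ for \eqref{eq:con200}) around a deterministic mode $j^\star$. Translating back via $\sM=j$, this is precisely the claimed concentration of $(m-\sM)/n$ around $\zeta$ (resp.\ of $\sM$ around $\kappa$), since $\epsilon n$ and $\epsilon m$ are much wider than these windows.

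The first step is to compute the consecutive ratio
\[
r_j \;:=\; \frac{g_{j+1}}{g_j} \;=\; \frac{\kappa\,(m-j)}{(j+1)(m+n-1-j)},
\]
which is strictly decreasing in $j\in\{0,\ldots,m-1\}$. Hence $g_j$ is unimodal, and its mode $j^\star$ is characterized (up to $\pm 1$) by $r_{j^\star}=1$, i.e.\ $\kappa(m-j^\star)=(j^\star+1)(m+n-1-j^\star)$. Setting $k=m-j^\star$ and $z=k/n$, dividing by $mn$, and letting $m,n\to\infty$ under $\kappa/m\to\bar\kappa$ and $n/m\to\bar n$ recovers exactly the quadratic $\bar n z^2+(\bar n+\bar\kappa-1)z-1=0$ of case~2; the first case $\bar n=0$, $\bar\kappa>1$ reduces this to the linear equation with root $\zeta=1/(\bar\kappa-1)$. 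In the third case ($\bar\kappa<1$, $n\ll m$) the same balance forces $j^\star\approx\kappa$ to leading order, since $(j^\star+1)(m+n-1-j^\star)\approx m(j^\star+1)$ whenever $j^\star\ll m$.

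For cases~1 and~2 I would now run a Laplace-type concentration argument. A direct computation shows $(\ln r_j)'|_{j=j^\star}\asymp -1/n$ with a constant depending continuously on the root $\zeta$, and a Taylor expansion then gives $\ln(g_j/g_{j^\star})\leq -c(j-j^\star)^2/n$ on a neighborhood of scale $\sqrt n$. The global monotonicity of $r_j$ extends this decay to all $|j-j^\star|\geq\epsilon n$: $r_j$ is bounded away from $1$ uniformly on each side of the neighborhood, so $g_j$ decays geometrically and the sum over the complement of $\{j:\,|(m-j)/n-\zeta|\leq\epsilon\}$ is exponentially smaller than $g_{j^\star}$, proving \eqref{eq:con100}. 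For case~3, since $\kappa\ll m$ and $n\ll m$, the tilt $(m+n-1-j)_{n-1}$ changes by a factor of only $\exp\!\bigl(O(n|j-\kappa|/m)\bigr)=1+o(1)$ over the natural Poisson window $|j-\kappa|\leq\sqrt\kappa\,\ln m$; combined with a standard Chernoff bound on the Poisson tails, this yields \eqref{eq:con200}.

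The main technical obstacle is producing uniform constants in the Laplace bound $\ln(g_j/g_{j^\star})\leq -c(j-j^\star)^2/n$ across the asymptotic regime, particularly in case~2 where $\zeta$ depends continuously on $(\bar n,\bar\kappa)$ and one must verify that $c$ stays bounded away from $0$ in the prescribed limit. A secondary difficulty is gluing the local estimate to the endpoint behavior at $j=m$, where $r_j$ shrinks as $O(1/n)$ rather than staying bounded away from $1$; this is handled by evaluating $r_j$ at a constant number of intermediate checkpoints and propagating geometric decay via the monotonicity of $r_j$.
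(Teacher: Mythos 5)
Your treatment of the first two regimes is correct and is, at bottom, the same Laplace-method idea as the paper's, executed through exact consecutive ratios rather than Stirling expansions. The paper writes each summand as $h_{m,n}(i)e^{-g_{m,n}(i)}$ in the variable $i=m-j$, shows $g_{m,n}$ is convex with minimizer solving $x^2+x(n+\kappa-m)-nm=0$ (exactly your balance $r_{j^\star}=1$), and bounds the tails by ``number of terms times maximal term,'' which in the regime $m\gg n$ forces a separate linear lower bound on $g_{m,n}(i)/i$ for $i>cn$ because the number of terms is of order $m$ and may be super-exponential in $n$ (see~\eqref{eq:case1linear}). Your version buys two genuine simplifications: no Stirling prefactor $h_{m,n}$ to track, and the geometric-series bound $\sum_{j\le j_1}g_j\le g_{j_1}(1+\delta)/\delta$ is insensitive to the number of terms, so the far tail $m-j\gg n$ needs no separate argument. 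Two small remarks: the ``endpoint difficulty'' at $j=m$ you flag is a non-issue, since $r_j\to 0$ there only accelerates the decay of that tail; and the local Gaussian bound is dispensable --- mode location, monotonicity of $r_j$, and $r_j$ bounded away from $1$ at relative distance $\epsilon$ from the mode already give outside mass $O(e^{-cn})\,g_{j^\star}$ against inside mass at least $g_{j^\star}$.

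The gap is in your case~3. The hypothesis there is $\kappa/m\to\bar\kappa<1$, which includes $\bar\kappa\in(0,1)$, so your premise ``$\kappa\ll m$'' is unwarranted; relatedly, your justification of $j^\star\approx\kappa$ via ``$j^\star\ll m$'' fails when $\bar\kappa>0$ (the correct reason is $m+n-1-j^\star=(m-j^\star)(1+O(n/(m-j^\star)))$ with $m-j^\star\asymp m$). More seriously, even in the subcase $\kappa\ll m$, the claim that the tilt $(m+n-1-j)_{n-1}$ varies by a factor $1+o(1)$ over the Poisson window $|j-\kappa|\le\sqrt{\kappa}\ln m$ is false in general: its logarithmic increment over that window is of order $n\sqrt{\kappa}\,\ln m/(m-\kappa)$, which diverges for, e.g., $n=m/\ln^2 m$ and $\kappa=m/\ln m$, both permitted by $n\ll m$ and $\kappa\ll m$. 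So you cannot reduce~\eqref{eq:con200} to untilted Poisson concentration plus a Chernoff bound.

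The repair is immediate and uses only the machinery you already built for cases~1--2, now on the $\epsilon m$ scale: the sequence $g_j$ is log-concave with mode $j^\star$ satisfying $|j^\star-\kappa|=O(\kappa n/(m-\kappa))=o(m)$, and at $j=\kappa\pm\epsilon m$ one computes $r_j\to\bar\kappa/(\bar\kappa\pm\epsilon)$, which is bounded away from $1$ on the appropriate side (when $\epsilon\ge\bar\kappa$ the left portion of the window already reaches $j=0$ and there is no left tail). Monotonicity of $r_j$ then gives geometric decay of $g_j$ outside $\{|j-\kappa|\le\epsilon m\}$, which is exactly~\eqref{eq:con200}; this parallels the paper's third bullet, where the minimizer of $g_{m,n}$ sits at $m\zeta_m$ with $\zeta_m\to 1-\bar\kappa$.
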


\begin{proof}
    See Section~\ref{sec:mainproof}.
\end{proof}

The first main result characterizes the maximum of the stationary queue lengths $\mymax{L}$ in operational regimes when the marginal distribution of the queue lengths is order-$1$ and discrete~\cite{JKM22}. In such regimes, the maximum $\mymax{L}$ scales logarithmically with the number of queues, it takes discrete values, and its variance is order-$1$. For this reason, our result is stated in terms of convergence in probability rather than the (continuous) Gumbel distribution. 

\begin{theorem} \label{thm:disc} Let $\kappa/m \to \bar\kappa \geq 0$, as $m\to\infty$. Then, as $n\to\infty$,
\[
    \frac{\ln \eta}{\ln n} \mymax{L} \stackrel{\Pr}{\rightarrow} 1,
\]
where
\begin{itemize}
    \item if $\bar\kappa > 1$ and $m \gg n \gg 1$, then $\eta=\bar\kappa$;   
    \item if $n/m \to \bar n >0$, then $\eta>1$ is the root of $\eta^2 - \eta(\bar n +\bar\kappa+1) + \bar\kappa$. 
    \end{itemize}
\end{theorem}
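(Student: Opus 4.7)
The plan is to exploit the probabilistic representation~\eqref{eq:maxprobrep}, which casts $\Pr[\mymax{L} > k]$ as the ratio
\[
\frac{\expect[\sF_n(m-\sM) \ind{\mymax{X}(m-\sM) > k}]}{\expect[\sF_n(m-\sM)]},
\]
i.e., as the tail of $\mymax{X}(m-\sM)$ under the tilted measure $\tilde{\Pr}$ whose Radon--Nikodym density with respect to the law of $\sM$ is proportional to $\sF_n(m-\sM)$. Thus establishing the theorem reduces to proving $\frac{\ln\eta}{\ln n}\mymax{X}(m-\sM) \toP 1$ under $\tilde{\Pr}$, and I would proceed in two steps: first identify where $m - \sM$ concentrates under $\tilde{\Pr}$; second, characterize the size of the maximum on a discrete simplex of that size.

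For the first step, Lemma~\ref{lemma:concentration} asserts that $(m-\sM)/n \to \zeta$ under $\tilde{\Pr}$, where $\zeta = 1/(\bar\kappa - 1)$ in the regime $\bar\kappa > 1, \, m \gg n \gg 1$, and $\zeta$ is the positive root of $\zeta^2 \bar n + \zeta(\bar n + \bar\kappa - 1) - 1$ in the regime $n/m \to \bar n$. For the second, conditional on $\sM = m - j$, $X(j)$ is uniform on $\Delta_j^n$, so Proposition~\ref{pr:max_simplex_discrete} yields $\frac{\ln(1 + n/j)}{\ln n} \mymax{X}(j) \toP 1$ whenever $\ln n \gg \ln(1+n/j)$. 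Since $j \approx \zeta n$ keeps $n/j$ bounded, the proposition's hypothesis is met and $\ln(1+n/j) \to \ln(1 + 1/\zeta) =: \ln\eta$. Combining the two steps produces $\frac{\ln\eta}{\ln n}\mymax{X}(m-\sM) \toP 1$ under $\tilde{\Pr}$, which is the desired conclusion.

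I would then verify algebraically that $\eta = 1 + 1/\zeta$ agrees with the formulas in the statement. In the first regime, $\zeta = 1/(\bar\kappa - 1)$ gives $\eta = 1 + (\bar\kappa - 1) = \bar\kappa$. In the second, writing $\zeta = 1/(\eta - 1)$ in the defining quadratic for $\zeta$ and clearing $(\eta-1)^2$ produces $\eta^2 - \eta(\bar n + \bar\kappa + 1) + \bar\kappa = 0$, matching the stated quadratic for $\eta$; the relevant root is $\eta > 1$ since $\zeta > 0$.

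The main technical obstacle is splicing together the two convergence statements, since Proposition~\ref{pr:max_simplex_discrete} is a single-sequence result while under $\tilde{\Pr}$ the value of $j = m - \sM$ fluctuates over a range around $\zeta n$. To handle this I would first invoke Lemma~\ref{lemma:concentration} to restrict numerator and denominator of~\eqref{eq:maxprobrep} to the sub-event $\{|(m-\sM)/n - \zeta| \leq \epsilon'\}$ at arbitrarily small cost, and then bound the conditional tail probability $\Pr[|\frac{\ln\eta}{\ln n}\mymax{X}(j) - 1| > \epsilon]$ uniformly over $j$ in the window $[(\zeta - \epsilon')n, (\zeta + \epsilon')n]$. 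Such uniformity can be obtained by extracting a worst-case sequence $j_n$ in the window and invoking Proposition~\ref{pr:max_simplex_discrete}, or more cleanly by reading off quantitative tail bounds from the first- and second-moment arguments underlying that proposition. A small additional care is needed to treat the two sub-regimes of Lemma~\ref{lemma:concentration} in parallel, but the argument is structurally identical in each.
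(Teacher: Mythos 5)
Your proposal is correct and follows essentially the same route as the paper: the representation~\eqref{eq:maxprobrep}, concentration of $(m-\sM)/n$ at $\zeta$ via Lemma~\ref{lemma:concentration}, Proposition~\ref{pr:max_simplex_discrete} for the maximum on the simplex, and the substitution $\zeta=1/(\eta-1)$ to recover the stated quadratic for $\eta$. The only difference is in the splicing step you flag as the main obstacle: the paper sidesteps a uniformity-over-$j$ argument by restricting one-sidedly to $\{m-\sM \le y_\epsilon\}$ (resp.\ $\{m-\sM \ge y_{-\epsilon}\}$ for the lower bound) and using the stochastic monotonicity of $\mymax{X}(j)$ in the simplex size $j$ to compare against a single deterministic size $\ceil{y_\epsilon}$ (resp.\ $\floor{y_{-\epsilon}}$), to which Proposition~\ref{pr:max_simplex_discrete} applies directly.
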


\begin{remark}
    Under the conditions of Theorem~\ref{thm:disc}, $\Pr[L_1 \geq k] \to \eta^{-k}$, $k=0,1,\ldots$ (see~\cite{JKM22}), where~$\eta$ is as in the statement of the theorem. Hence, the asymptotic behavior of $\mymax{L}$ on the $\ln n$ scale is the same as the maximum of~$n$ independent geometric random variables with parameter $\eta^{-1}$. Recall that the geometric distribution is not in any maximum domain of attraction~\cite[p.~35]{de2010extreme}.
\end{remark}

\begin{proof} The proof is based on Lemma~\ref{lemma:concentration}. Note that $\zeta=(\eta-1)^{-1}>0$ is a root of $\zeta^2 \bar n + \zeta (\bar n + \bar\kappa -1) -1$ when $\eta>1$ is a root of $\eta^2 - \eta(\bar n +\bar\kappa+1) + \bar\kappa$. Define 
\[
x_\delta := (1+\delta) \frac{\ln n}{\ln\eta} \qquad \text{and} \qquad y_\epsilon := \frac{n}{(1-\epsilon)\eta-1}. 
\]
The representation~\eqref{eq:cdfprobrep} and conditioning on whether $\{m-\sM \leq y_\epsilon\}$ or $\{m-\sM > y_\epsilon\}$ yield, for $\delta>0$ and $\epsilon>0$,
    \begin{align*}
        \Pr[\mymax{L} > x_\delta] &= \frac{\expect[\sF_n(m-\sM) \ind{\mymax{X}(m-\sM) > x_\delta}]}{\expect[\sF_n(m-\sM)]} \\
        &\leq \frac{\expect[\sF_n(m-\sM) \ind{\mymax{X}(m-\sM) > x_\delta} \, \ind{m-\sM \leq y_\epsilon}]}{\expect[\sF_n(m-\sM)]} + \frac{\expect[\sF_n(m-\sM) \ind{\mymax{X}(m-\sM) > x_\delta} \, \ind{m-\sM>y_\epsilon}]}{\expect[\sF_n(m-\sM)]} \\
        &\leq \Pr[\mymax{X}(\ceil{y_\epsilon})>x_\delta] + \frac{\expect[\sF_n(m-\sM) \ind{m-\sM>y_\epsilon}]}{\expect[\sF_n(m-\sM)]} \\
        &\to 0,
    \end{align*}
as $n\to\infty$, for all $\epsilon$ small enough, due to Proposition~\ref{pr:max_simplex_discrete} and Lemma~\ref{lemma:concentration}. Similarly, for $\delta>0$ and $\epsilon>0$, we develop a lower bound:
    \begin{align*}
        \Pr[\mymax{L} > x_{-\delta}] &= \frac{\expect[\sF_n(m-\sM) \ind{\mymax{X}(m-\sM) > x_{-\delta}}]}{\expect[\sF_n(m-\sM)]} \\
        &\geq \frac{\expect[\sF_n(m-\sM) \ind{\mymax{X}(m-\sM) > x_{-\delta}} \, \ind{m-\sM \geq y_{-\epsilon}}]}{\expect[\sF_n(m-\sM)]} \\
        &\geq \Pr[\mymax{X}(\floor{y_{-\epsilon}}) > x_{-\delta} ] \frac{\expect[\sF_n(m-\sM) \ind{m-\sM \geq y_{-\epsilon}}]}{\expect[\sF_n(m-\sM)]} \\
        &\to 1,
    \end{align*}
as $n\to\infty$, for all $\epsilon$ small enough, due to Proposition~\ref{pr:max_simplex_discrete} and Lemma~\ref{lemma:concentration}. Combining the two bounds yields the statement of the theorem. 
\end{proof}

The following theorem covers an operational regime where individual queue lengths are order of $(m-\kappa) \gg~1$; hence, the vector~$L$ is scaled by $(m-\kappa)$. In that regime, the scaled marginal distribution is asymptotically continuous, and, as a result, $(m-\kappa)^{-1} \mymax{L}$ is asymptotically equivalent to the maximum of dependent continuous random variables, assuming the number of single-server queues~$n$ remains fixed.   

\begin{theorem} \label{thm:cont} Let $\kappa/m \to \bar\kappa <1$, as $m\to\infty$. For fixed $n \in \nat$, we have  $(m-\kappa)^{-1} L \weak X$, as $m\to\infty$, where~$X$ is uniform on a unit simplex of nonnegative reals $\Delta^n=\{x \in \real_0^n:\, |x|=1\}$. The moments of $(m-\kappa)^{-1} \mymax{L}$ converge to the moments of $\mymax{X}$. In particular, as $m\to\infty$,
\begin{align*}
\frac{n}{m-\kappa} \expect \mymax{L} &\to H_1(n), \\
\frac{n^2}{(m-\kappa)^2}\var(\mymax{L}) &\to \frac{n H_2(n) - H^2_1(n)}{n+1}.
\end{align*}
Moreover, as $n\to\infty$,
\begin{equation}
\lim_{m\to\infty}\Pr\left[\frac{n}{m-\kappa} \mymax{L} \leq x + \ln n \right] = \Pr[n\mymax{X} \leq x + \ln n] \to e^{-e^{-x}}. \label{eq:thmGumbel}
\end{equation}
\end{theorem}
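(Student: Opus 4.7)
The plan is to combine the probabilistic representation~\eqref{eq:cdfprobrep} with the second regime of Lemma~\ref{lemma:concentration}. Since $\kappa/m \to \bar\kappa <1$ and $m\gg n$ holds trivially for fixed $n$, equation~\eqref{eq:con200} localizes the mass in $\expect[\sF_n(m-\sM)]$ to the event $\{|\sM-\kappa| \leq \epsilon m\}$, on which $m-\sM = (m-\kappa)(1+O(\epsilon))$ uniformly as $m\to\infty$ for any small $\epsilon>0$.

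Next, I would record the elementary weak limit that for fixed $n$ and $k\to\infty$, a vector $X(k)$ uniform on $\Delta^n_k$ satisfies $X(k)/k \weak X$, with $X$ uniform on the continuous simplex $\Delta^n$. This is standard: the number of lattice points in any orthant-slice $\{x \in \Delta^n_k: x \leq k l\}$ is asymptotic to $(n-1)!\, k^{n-1} \operatorname{vol}\{x \in \Delta^n: x \leq l\}$, matching the constant density of $X$. Combining this convergence with the concentration event, the representation~\eqref{eq:cdfprobrep} gives
\[
\Pr\!\left[\frac{L}{m-\kappa} \leq l\right] = \expect\!\left[\frac{\sF_n(m-\sM)}{\expect\sF_n(m-\sM)}\,\ind{X(m-\sM) \leq (m-\kappa)l}\right] \to \Pr[X\leq l],
\]
by a sandwich argument in which the $O(\epsilon)$ slack in the ratio $(m-\sM)/(m-\kappa)$ is absorbed into perturbations $l(1\pm O(\epsilon))$ of the thresholds, followed by $\epsilon \downarrow 0$. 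This proves $(m-\kappa)^{-1} L \weak X$.

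For the moments of $\mymax{L}/(m-\kappa)$, uniform integrability is automatic since $\mymax{L}\leq m$ yields $\mymax{L}/(m-\kappa) \leq m/(m-\kappa) \to 1/(1-\bar\kappa)$, so all positive moments are uniformly bounded in $m$. Hence weak convergence implies convergence of all moments, with limiting values $\expect\mymax{X}$ and $\var(\mymax{X})$ read off directly from Proposition~\ref{pr:max_simplex_cont}. The Gumbel statement~\eqref{eq:thmGumbel} then follows in two steps: first sending $m\to\infty$ turns $\Pr[n\mymax{L}/(m-\kappa)\leq x+\ln n]$ into $\Pr[n\mymax{X}\leq x+\ln n]$ by the weak convergence above applied to the continuous functional $\mymax{\cdot}$, and then the $n\to\infty$ Gumbel limit is exactly \eqref{eq:simplexGumbel}.

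The main obstacle is bookkeeping rather than technical depth: one must ensure that the weak convergence $X(k)/k \weak X$ is combined correctly with the random normalization $m-\sM$ in place of $m-\kappa$. The monotonicity of the indicators appearing in the representation, together with the tightness provided by~\eqref{eq:con200} and the freedom to shrink $\epsilon$, reduces this to a standard sandwich in $\epsilon$, so no genuinely new estimate is required beyond the preliminary simplex results of Section~\ref{sec:simplex}.
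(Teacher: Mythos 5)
Your proposal is correct and follows essentially the same route as the paper: localize $m-\sM$ near $m-\kappa$ via the third regime of Lemma~\ref{lemma:concentration}, use stochastic monotonicity in the simplex size to sandwich the CDF between $\Pr[X(\hat m_{\pm\epsilon})\leq(m-\kappa)x]$ (the paper absorbs the $\epsilon$-slack into the simplex size rather than the threshold, which is equivalent), deduce the weak limit from the discrete-to-continuous simplex convergence, get moment convergence from the bound $\mymax{L}\leq m$, and read off the Gumbel limit from Proposition~\ref{pr:max_simplex_cont}. The only difference is that the paper writes out the Riemann-sum computation for $\Pr[X(\hat m_\epsilon)\leq \hat l]$ explicitly, which you assert as standard lattice-point counting.
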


\begin{remark}
\label{rm:thm:cont}
Note that $(nH_2(n)-H^2_1(n))/(n+1)$ converges to $H_2(\infty) = \pi^2/6$, as $n\to\infty$, which matches the Gumbel distribution in~\eqref{eq:thmGumbel}. However, $H^2_1(n)/n$ decays at the rate $\sim \ln^2n /n$, and hence convergences slowly. For example, the relative error for $n=100$ and $n=1000$ is approximately $21.6\%$ and $3.7\%$, respectively. 
\end{remark}

\begin{proof} We start with a proof of weak convergence. To this end, we consider two bounds for the CDF of $(m-\kappa)^{-1} L$. Define $\hat m_\epsilon:= \floor{\frac{m-\kappa}{1+\epsilon}}$.

For $\epsilon>0$, the representation~\eqref{eq:cdfprobrep} and considering whether $\{|\sM-\kappa|\leq \frac{m-\kappa}{1+\epsilon}\}$ or $\{|\sM-\kappa|>\frac{m-\kappa}{1+\epsilon}\}$ (note that $(m-\kappa) \sim (1-\bar\kappa)m$, as $m\to\infty$) yield, for $x \in \real^n$,
\begin{align}
    \Pr[L \leq (m-\kappa)x] &= \frac{\expect[\sF_n(m-\sM) \, \ind{X(m-\sM) \leq (m-\kappa)x}]}{\expect[\sF_n(m-\sM)]} \notag \\
    &\leq \Pr[X(\hat m_\epsilon) \leq (m-\kappa)x] + \frac{\expect[\sF_n(m-\sM) \, \ind{|\sM-\kappa| > \frac{\epsilon}{1+\epsilon} (m-\kappa)}]}{\expect[\sF_n(m-\sM)]}; \label{eq:proofT2-200}
\end{align}
here, $\{|\sM-\kappa| \leq \frac{\epsilon}{1+\epsilon} (m-\kappa)\}$ implies $\{m-\sM \geq \frac{m-\kappa}{1+\epsilon}\}$. The first term is estimated by exploiting the fact that $X(\hat m_\epsilon)$ is uniform on a simplex of size $\hat m_\epsilon$:
\begin{align*}
    \Pr[X(\hat m_\epsilon) \leq \hat l] &= \sum_{i \in \integer^n_0: \, i \leq l, \, |i|=\hat m_\epsilon} \frac{1}{\binom{n+\hat m_\epsilon-1}{n-1}} \\
    &= \frac{1}{\binom{n+\hat m_\epsilon-1}{n-1}}\sum_{i_1=0}^{\hat l_1} \sum_{i_2=0}^{\hat l_2 \wedge (\hat m_\epsilon-i_1)} \cdots \sum_{i_{n-1}=0}^{\hat l_{n-1} \wedge ( \hat m_\epsilon- i_{1:(n-2)})} \ind{ \hat m_\epsilon - i_{1:(n-1)} \leq \hat l_n} \\
    &\to (n-1)! \int_0^{(1+\epsilon)x_1} \int_0^{(1+\epsilon)x_2 \wedge (1-u_1)} \!\!\!\!\!\!\!\!\cdots \int_0^{(1+\epsilon)x_{n-1} \wedge (1-u_{1:(n-2)})} \!\!\!\!\!\!\ind{1 - u_{1:(n-1)} \leq x_n} \drm u_{n-1} \cdots \drm u_1,
\end{align*}
as $m\to\infty$, where $\hat l_i :=\floor{(m-\kappa) x_i}$ satisfy $\hat l_i/\hat m_\epsilon \to (1+\epsilon)x_i$, as $m\to\infty$, and $x_{1:j} := \sum_{i=1}^j x_i$. Letting $m\to \infty$ in~\eqref{eq:proofT2-200}, using Lemma~\ref{lemma:concentration} (the second term in~\eqref{eq:proofT2-200} vanishes) and the preceding limit, and then letting $\epsilon \to 0$ result in, for $x \in \Delta^n$,
\begin{align*}
\limsup_{n\to\infty} \Pr[L \leq (m-\kappa)x] &\leq (n-1)! \int_0^{x_1} \int_0^{x_2 \wedge (1-u_1)} \!\!\!\!\!\!\!\!\cdots \int_0^{x_{n-1} \wedge (1-u_{1:(n-2)})} \!\!\!\!\!\!\ind{1 - u_{1:(n-1)} \leq x_n} \drm u_{n-1} \cdots \drm u_1 \\
&= \int_{u\in\real^n_0:\, |u|=1, u\leq x} (n-1)!  \, \drm u \\
&= \Pr[X \leq x],
\end{align*}
 since $(n-1)!$ is the uniform density on a unit simplex in $\real^n_0$.

The lower bound argument is similar. For $\epsilon>0$, $\{|\sM-\kappa| \leq \frac{\epsilon}{1-\epsilon} (m-\kappa)\}$ implies $\{m-\sM \geq \frac{m-\kappa}{1-\epsilon}\}$, which, together with~\eqref{eq:cdfprobrep}, results in
\begin{align}
    \Pr[L \leq (m-\kappa)x] &= \frac{\expect[\sF_n(m-\sM) \, \ind{X(m-\sM) \leq (m-\kappa)x}]}{\expect[\sF_n(m-\sM)]} \notag \\
    &\geq \Pr[X(\hat m_{-\epsilon}) \leq (m-\kappa)x] \frac{\expect[\sF_n(m-\sM) \, \ind{|\sM-\kappa| \leq \frac{\epsilon}{1-\epsilon}(m-\kappa)}]}{\expect[\sF_n(m-\sM)]}. \label{eq:proofT2-500}
\end{align}
The first factor can be analyzed as in the previous case, which leads to, as $m\to\infty$, 
\[
\Pr[X(\hat m_{-\epsilon}) \leq \hat l] \to (n-1)! \int_0^{(1-\epsilon)x_1} \int_0^{(1-\epsilon)x_2 \wedge (1-u_1)} \!\!\!\!\!\!\!\!\cdots \int_0^{(1-\epsilon)x_{n-1} \wedge (1-u_{1:(n-2)})} \!\!\!\!\!\!\ind{1 - u_{1:(n-1)} \leq x_n} \drm u_{n-1} \cdots \drm u_1.
\]
Letting $m\to \infty$ in~\eqref{eq:proofT2-500}, using Lemma~\ref{lemma:concentration} (the fraction in~\eqref{eq:proofT2-500} converges to~$1$) and the preceding limit, and then letting $\epsilon \to 0$ result in, for $x \in \Delta^n$, 
\begin{align*}
\liminf_{n\to\infty} \Pr[L \leq (m-\kappa)x] &\geq (n-1)! \int_0^{x_1} \int_0^{x_2 \wedge (1-u_1)} \!\!\!\!\!\!\!\!\cdots \int_0^{x_{n-1} \wedge (1-u_{1:(n-2)})} \!\!\!\!\!\!\ind{1 - u_{1:(n-1)} \leq x_n} \drm u_{n-1} \cdots \drm u_1 \\
&= \Pr[X \leq x].
\end{align*}
Combining the upper and lower bounds yields the desired $(m-\kappa)^{-1} L \weak X$, as $m\to\infty$. 

The convergence of moments is implied by the weak converges due to the continuity of the $\max$ operator and $(m-\kappa)^{-1} \mymax{L} = (m-\kappa)^{-1} \, \min\{ \mymax{L}, \, m\}$. The specific limits for the first two moments follow from Proposition~\ref{pr:max_simplex_cont}. Similarly, the weak convergence and~\eqref{eq:simplexGumbel} in Proposition~\ref{pr:max_simplex_cont} yield~\eqref{eq:thmGumbel}. This completes the proof of the theorem.
\end{proof}

\section{Non-Homogeneous Network} \label{sec:nonhomogen}

In this section, we discuss briefly how our methodology can be applied to a non-homogeneous network where the utilization levels of the single-server queues differ. The results are stated in Theorems~\ref{thm:3} and~\ref{thm:4}; we omit the detailed derivations for this general case due to 
tedious technical computations.
In this regard, we first specify the model and derive a probabilistic representation of the distribution of the maximum akin to equation~\eqref{eq:maxprobrep}; see equation~\eqref{eq:maxprobrep2} below. 

Let $\kappa_i = p_i \lambda/\mu_i$, $i=1,\ldots,n$, and, without loss of generality, assume that $\kappa_i$'s are ordered:
\[
\kappa_1 = \cdots = \kappa_{n_1} < \kappa_{n_1+1} \leq \cdots \leq \kappa_n,
\]
where $n_1 \leq n$ denotes the number of queues with the highest (and identical) utilization level. Let~$\sM$ be $\kappa_1$-mean Poisson and $\{\sL_i\}_{i=n_1+1}^n$ be independent geometrically distributed random variables: 
\[
\Pr[\sL_i \geq k]= (\kappa_1/\kappa_i)^k, \qquad k=0,1,\ldots
\]
For notational convenience, let $\sL = (\sL_{n_1+1},\ldots,\sL_n) \in \integer_0^{n-n_1}$. Then, the generalization of the stationary distribution~\eqref{eq:prodpi} can be expressed in terms of $\sM$ and $\sL$:
\begin{align*}
\pi(l) &= c_{m,n}^{-1} \frac{m!}{(m- |l|)!} \prod_{i=1}^n \kappa_i^{-l_i} \\
&= c_{m,n}^{-1} \frac{m!}{(m- |l|)!} \kappa_1^{-|l|} \prod_{i=n_1+1}^n \left(\frac{\kappa_1}{\kappa_i}\right)^{l_i} \\
&= c_{m,n}^{-1} \frac{\Pr[\sM=m-|l|]}{\Pr[\sM=m] \, \Pr[|\sL|=0]} \prod_{i=n_1+1}^n \Pr[\sL_i=l_i],
\end{align*}
where $c^{-1}_{m,n}= \pi(0)$ is a normalization constant. From this product-form solution and the expression for $c_{m,n}$ in~\cite{JKM22}, the joint distribution of single-server queue lengths follows:
\begin{align*}
    \Pr[L \leq l] &= c_{m,n}^{-1} \frac{1}{\Pr[\sM=m] \, \Pr[\sL=0]} \sum_{j=0}^m \sum_{y \in \integer_0^{n-n_1}:\, |y|=j} \sum_{k=0}^{m-j} \sum_{x \in \integer^{n_1}_0: \, |x|=k} \Pr[\sM = m-j-k, \, \sL=y] \, \ind{(x, y) \leq l} \notag \\
    &= \frac{\expect[(m+n-1-\sM-|\sL|)_{n_1-1} \ind{\sM + |\sL| \leq m} \ind{(X(m-\sM -|\sL|), \, \sL) \leq l}]}{\expect[(m+n-1-\sM-|\sL|)_{n_1-1} \ind{\sM +|\sL|\leq m}]}, 
\end{align*}
where, conditional on $\{\sM+|\sL| = m-k\}$, $X(k) \in \integer_0^{n_1}$ in uniformly distributed on $\Delta^{n_1}_k$. Therefore, the distribution of the maximum can be characterized by the following ratio of expectations:
\begin{equation}
\Pr[\mymax{L} > k] = \frac{\expect[(m+n-1-\sM-|\sL|)_{n_1-1} \ind{\sM + |\sL| \leq m} \ind{\max\{\mymax{X}(m-\sM -|\sL|), \, \mymax{\sL}\} >k }]}{\expect[(m+n-1-\sM-|\sL|)_{n_1-1} \ind{\sM +|\sL|\leq m}]}. \label{eq:maxprobrep2}
\end{equation}
The structure of~\eqref{eq:maxprobrep2} mirrors that of~\eqref{eq:maxprobrep}, which suggests a two-step analysis, similar to the homogeneous case (without the $\sL$). First, one estimates the values of $\sM$ and $\sL$ on which the mass in the denominator of~\eqref{eq:maxprobrep2} is concentrated (as in Lemma~\ref{lemma:concentration}). In the operational regimes covered by Theorem~\ref{thm:disc}, these estimates are expressed in terms of large-deviation rate function of $\sM+|\sL|$, as discussed in~\cite{JKM22}. Once these estimates are obtained, the analysis proceeds with examining $\mymax{X}(m-\sM-|\sL|)$ and~$\mymax{\sL}$. Notably, when the number of utilization levels is fixed, the vector of the corresponding identically distributed $\sL_i$'s, conditional on the sum of these random variables, is uniformly distributed on a simplex as well (since the $\sL_i$'s are geometric). Thus, compared to the homogeneous case, this yields a fixed number of simplexes, each with its own maximum; the overall maximum is the maximum among these.

In order to state an analogue of Theorem~\ref{thm:disc} for a non-homogeneous network, we define the limiting logarithmic moment generating function of $\sM+|\sL|$, $\Lambda_{\sM+|\sL|}:\, [0,\infty) \to \real$, as  
\[
\Lambda_{\sM+|\sL|}(\theta) := \lim_{m\to\infty} \frac1m \ln \expect e^{-\theta (\sM+|\sL|)},
\]
when the limit exists. The logarithmic MGF is relevant here because of its Legendre-Fenchel transform, known as the rate function $\ell_{\sM+|\sL|}: [0,\infty) \to [0,\infty)$ defined by
\[
\ell_{\sM+|\sL|}(x) := \sup_{\theta \geq 0} \{-\theta x - \Lambda_{\sM+|\sL|}(\theta)\},
\]
can be used to estimate the probability of $\{\sM+|\sL| \leq xm\}$ via Cram\'er theorem
\[
\ell_{\sM+|\sL|}(x) = - \lim_{m\to\infty} \frac1m \ln \Pr[\sM+|\sL| \leq xm], \qquad x \geq 0
\]
and, consequently, the ratio of expectations in~\eqref{eq:maxprobrep2} when the network is in the regimes covered by Theorem~\ref{thm:disc}.

As in Theorem~\ref{thm:disc}, the customer pool is the bottleneck resource in the following theorem. All single-server queues are geometrically distributed, and the parameters are determined by large-deviation behavior of $\sM+|\sL|$~\cite{JKM22}. 

\begin{theorem} \label{thm:3} Suppose $\kappa_1/\kappa_i \in \{1=\varrho_1 > \varrho_2 > \cdots >\varrho_k\}$, $i=1,\ldots,n$, where $k<\infty$ and $\varrho_j$ do not vary with~$m$. Let $m^{-1} \expect[\sM+|\sL|] \to \bar\kappa_1 + \bar\sl\geq 0$, as $m\to\infty$. 
\begin{itemize}
    \item If $\bar\kappa_1 + \bar\sl> 1$ and $m \gg n_1 \gg 1$, then 
    \[
    \min_{j=1,\ldots,k} \frac{\ln \eta - \ln\varrho_j}{\ln \{\#i: \, \kappa_1/\kappa_i=\varrho_j\}} \mymax{L} \stackrel{\Pr}{\rightarrow} 1,
\]
    as $n_1\to\infty$, where $\eta$ satisfies 
    $\Lambda'_{\sM+|\sL|}(\ln \eta)=-1$;
    \item If $n_1/m \to \bar n_1 >0$, then 
    \[
    \frac{\ln \eta}{\ln n_1} \mymax{L} \stackrel{\Pr}{\rightarrow} 1,
\]
    as $n_1\to\infty$, where $\eta$ satisfies 
    $\Lambda'_{\sM+|\sL|}(\ln \eta) - \bar n_1(\eta-1)^{-1} = -1$.
    \end{itemize}
\end{theorem}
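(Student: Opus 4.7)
The strategy mirrors the proof of Theorem~\ref{thm:disc}, now applied to the representation~\eqref{eq:maxprobrep2}. Compared to the homogeneous case, two new ingredients are needed: a saddle-point concentration lemma for $\sM+|\sL|$ expressed via the limiting log MGF $\Lambda_{\sM+|\sL|}$, and a separate treatment of $\mymax{\sL}$, which now contributes to $\mymax{L}$ alongside $\mymax{X}(m-\sM-|\sL|)$ because the non-bottleneck groups $j>1$ no longer feed into the simplex $X$.

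The first step is the analogue of Lemma~\ref{lemma:concentration}: under the biased measure with weight $w(s):=(m+n-1-s)_{n_1-1}\ind{s\leq m}$, the sum $\sM+|\sL|$ concentrates at $s^*=m-n_1\zeta$ with $\zeta=(\eta-1)^{-1}$. Setting the derivative of the log biased density to zero at the saddle gives $-\theta^*+(\ln w)'(s^*)=0$, where $\theta^*=\ln\eta$ is the exponential tilt parameter at which the tilted mean of $\sM+|\sL|$ matches $s^*$, i.e.\ $-\Lambda'_{\sM+|\sL|}(\theta^*)=s^*/m$. In the first regime ($m\gg n_1$), $s^*/m\to 1$, yielding $\Lambda'_{\sM+|\sL|}(\ln\eta)=-1$, and the slope $(\ln w)'(s^*)$ is an asymptotically vanishing correction. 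In the second regime ($n_1/m\to\bar n_1>0$), $s^*/m\to 1-\bar n_1\zeta$ and $(\ln w)'(s^*)$ converges to $\bar n_1/(\eta-1)$, producing the shifted saddle equation $\Lambda'_{\sM+|\sL|}(\ln\eta)-\bar n_1(\eta-1)^{-1}=-1$.

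Given the concentration, on the typical event the conditional law of $(\sM,\sL)$ is effectively an exponential tilt at $\theta^*=\ln\eta$; under this tilt each $\sL_i$ in group $j$ remains independent geometric with parameter $\varrho_j/\eta$, while $\sM$ is Poisson with tilted mean $\kappa_1/\eta$. Then: (i) Proposition~\ref{pr:max_simplex_discrete} applied to $\mymax{X}(m-\sM-|\sL|)$ with $m-\sM-|\sL|\approx n_1\zeta$ and dimension $n_1$ yields scaling $\ln n_1/\ln(1+1/\zeta)=\ln n_1/\ln\eta$, which is exactly the group-$1$ term $\ln n_1/(\ln\eta-\ln\varrho_1)$; (ii) within each group $j>1$, classical extreme-value theory for i.i.d.\ geometric$(\varrho_j/\eta)$ variables gives a within-group maximum of scale $\ln n_j/(\ln\eta-\ln\varrho_j)$. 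Running the upper/lower bound dichotomy of the proof of Theorem~\ref{thm:disc} (splitting expectations by whether $\sM+|\sL|$ is within $\epsilon m$ of $s^*$) and using $\mymax{L}=\max\{\mymax{X}(m-\sM-|\sL|),\mymax{\sL}\}$, the overall scaling becomes $\max_j \ln n_j/(\ln\eta-\ln\varrho_j)$, which rearranges to the first bullet. In the second regime the implicit assumption is that only group $1$ grows (i.e.\ $n_j=O(1)$ for $j>1$, or at least $\ln n_j/\ln n_1\to 0$), in which case only $\ln n_1/\ln\eta$ survives.

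The main obstacle is the concentration step: unlike Lemma~\ref{lemma:concentration}, whose proof rests on a Stirling expansion of a single Poisson mass function, here $\sM+|\sL|$ is a heterogeneous sum of one Poisson and $n-n_1$ geometric summands with parameters varying across groups. Establishing a local-CLT-type refinement of Cram\'er's theorem on a $\sqrt{m}$-scale around $s^*$, sharp enough to combine with the polynomial factor $(m+n-1-s)_{n_1-1}$ whose relative magnitude across $s$ is itself regime-dependent, is the principal technical burden. A secondary subtlety in the first bullet is upgrading the geometric-maximum convergence simultaneously across all $k$ groups when several $n_j\to\infty$ at possibly different rates, and handling the joint dependence between $\mymax{X}$ and the group maxima of $\sL$ induced by conditioning on the saddle location.
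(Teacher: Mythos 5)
The paper deliberately omits a rigorous proof of Theorem~\ref{thm:3} ("tedious technical computations"), so there is no detailed argument to compare against; your proposal follows exactly the outline the paper sketches in Section~\ref{sec:nonhomogen}: start from~\eqref{eq:maxprobrep2}, establish a concentration analogue of Lemma~\ref{lemma:concentration} for $\sM+|\sL|$ phrased through $\Lambda_{\sM+|\sL|}$ and its Legendre transform, and then analyze separately the group-$1$ simplex maximum and the within-group maxima of the geometric $\sL_i$'s. Your identification of the tilted marginals (group-$j$ components geometric with parameter $\varrho_j/\eta$), the resulting per-group scales $\ln n_j/(\ln\eta-\ln\varrho_j)$, the consistency check that the group-$1$ simplex contributes $\ln n_1/\ln\eta$, and your honest flagging of the local-limit refinement of Cram\'er's theorem as the main technical burden are all in line with what the paper indicates.

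However, your saddle-point bookkeeping is internally inconsistent, and taken literally it does not produce the stated equations. The stationarity condition for $s\mapsto w(s)\Pr[\sM+|\sL|=s]$ is $(\ln w)'(s^*)+\theta(s^*)=0$, where $\theta(s)$ is the tilt with $-m\Lambda'_{\sM+|\sL|}(\theta(s))=s$. Since $(\ln w)'(s)\approx -\ln\bigl(1+n_1/(m-s)\bigr)$, at $s^*=m-n_1\zeta$ one gets $(\ln w)'(s^*)\to-\ln(1+1/\zeta)=-\ln\eta$ in \emph{both} regimes; this slope is precisely what pins $\theta^*=\ln\eta$, and it is not an "asymptotically vanishing correction" in the first regime. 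If it were vanishing you would get $\theta^*=0$ and hence $-\Lambda'_{\sM+|\sL|}(0)=\lim m^{-1}\expect[\sM+|\sL|]=\bar\kappa_1+\bar\sl\neq 1$, contradicting the equation you are trying to derive. Likewise, the term $\bar n_1(\eta-1)^{-1}$ in the second bullet does not come from the slope of $\ln w$: it comes from the \emph{location} of the saddle, $s^*/m\to 1-\bar n_1\zeta=1-\bar n_1(\eta-1)^{-1}$, substituted into $-\Lambda'_{\sM+|\sL|}(\ln\eta)=s^*/m$. You arrive at the correct equations, but by an accounting that conflates the two dual quantities (the tilt $\theta^*$, fixed by the weight's logarithmic derivative, and the saddle location $s^*/m$, fixed by $\Lambda'$). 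A minor additional point: your concern that the second bullet implicitly requires $n_j=O(1)$ for $j>1$ is unnecessary — the hypothesis $m^{-1}\expect[\sM+|\sL|]\to\bar\kappa_1+\bar\sl<\infty$ already forces $n_j=O(m)=O(n_1)$, and since $\ln\eta-\ln\varrho_j>\ln\eta$ for $j>1$, group $1$ dominates the maximum automatically.
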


\begin{remark}
    When the network is homogeneous, the conditions of Theorem~\ref{thm:3} are reduced to the corresponding ones in Theorem~\ref{thm:disc}. In that case $n_1=n$, $|\sL|=0$ and $\bar\sl=0$, which implies $\Lambda'_{\sM+|\sL|}(\ln \eta) = \Lambda'_\sM(\ln \eta) = -\bar\kappa_1/\eta$, where $\kappa_1/m \to \bar\kappa_1$, as $m\to\infty$.
\end{remark}

The following theorem generalizes Theorem~\ref{thm:cont}. In the operational regime it covers, unlike in Theorem~\ref{thm:3}, the number of customers is sufficient to ``clog" the bottleneck queues, the $n_1$ queues with the largest relative utilization (indexed by $1,2,\ldots,n_1$). These bottleneck queues determine the overall maximum as they are proportional to $(m-\expect\sM-\expect|\sL|)/n_1$; the condition $\bar\sl<1$ ensures that the maximum of non-bottleneck queues is at most order-$\log m$. 

\begin{theorem} \label{thm:4}
    Suppose that $\kappa_1/{\kappa_{n_1+1}}< 1$ does not vary with~$m$.
    Let $m^{-1} \expect[\sM +|\sL|] \to \bar\kappa_1 + \bar\sl<1$, as $m\to\infty$. For fixed $n_1 \in \nat$, we have  $(m-\expect\sM-\expect|\sL|)^{-1} (L_1,\ldots,L_{n_1} )\weak X$, as $m\to\infty$, where~$X$ is uniform on a unit simplex of nonnegative reals $\Delta^{n_1}=\{x \in \real_0^{n_1}:\, |x|=1\}$. The moments of $(m-\expect \sM-\expect|\sL|)^{-1} \mymax{L}$ converge to the moments of $\mymax{X}$. In particular, as $m\to\infty$,
\begin{align*}
\frac{n_1}{m-\expect\sM-\expect|\sL|} \expect \mymax{L} &\to H_1(n_1), \\
\frac{n_1^2}{(m-\expect \sM - \expect|\sL|)^2}\var(\mymax{L}) &\to \frac{n_1 H_2(n_1) - H^2_1(n_1)}{n_1+1}.
\end{align*}
Moreover, as $n_1\to\infty$,
\begin{equation*}
\lim_{m\to\infty}\Pr\left[\frac{n_1}{m-\expect\sM-\expect|\sL|} \mymax{L} \leq x + \ln n_1 \right] = \Pr[n_1\mymax{X} \leq x + \ln n_1] \to e^{-e^{-x}}. 
\end{equation*}
\end{theorem}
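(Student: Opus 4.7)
The proof plan is to mirror the structure of Theorem~\ref{thm:cont}, replacing the homogeneous scaling factor $(m-\kappa)$ by $(m-\expect\sM-\expect|\sL|)$ and handling the additional randomness carried by the geometric vector $\sL$. The starting point is the joint CDF representation derived above,
$$\Pr[L \leq l] = \frac{\expect\bigl[\sF_{n_1}(m-\sM-|\sL|) \, \ind{(X(m-\sM-|\sL|),\, \sL) \leq l}\bigr]}{\expect\bigl[\sF_{n_1}(m-\sM-|\sL|)\bigr]},$$
where $\sF_{n_1}(m-\sM-|\sL|) := (m+n-1-\sM-|\sL|)_{n_1-1} \ind{\sM+|\sL|\leq m}$. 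Since $n_1$ is fixed and $\bar\kappa_1+\bar\sl<1$, on typical realizations the arguments of the falling factorial are of order $m$, so the weight $\sF_{n_1}$ acts essentially as a polynomial of degree $n_1-1$ in $m-\sM-|\sL|$.

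First I would establish a non-homogeneous analogue of the concentration bound~\eqref{eq:con200} in Lemma~\ref{lemma:concentration}. Since $\sM+|\sL|$ is the sum of a Poisson variable and finitely many independent geometric variables with mean of order $m$ and variance of order $m$, a Chernoff-type estimate shows that the probability that $\sM+|\sL|$ deviates from $\expect\sM+\expect|\sL|$ by more than $\epsilon m$ decays exponentially in $m$, while $\sF_{n_1}$ contributes only a polynomial factor $O(m^{n_1-1})$. Hence the ratio of expectations is asymptotically unaffected by restricting to the event $\mathcal E_\epsilon := \{\bigl|\sM+|\sL|-(\expect\sM+\expect|\sL|)\bigr|\leq \epsilon(m-\expect\sM-\expect|\sL|)\}$.

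Next I would derive the weak convergence. On $\mathcal E_\epsilon$, $X(m-\sM-|\sL|)$ is uniform on a discrete simplex $\Delta^{n_1}_k$ with $k=(1+O(\epsilon))(m-\expect\sM-\expect|\sL|)$. Reproducing the upper- and lower-bound Riemann-sum arguments used in the proof of Theorem~\ref{thm:cont}, with $(m-\kappa)$ replaced by $(m-\expect\sM-\expect|\sL|)$, yields weak convergence of $(m-\expect\sM-\expect|\sL|)^{-1}(L_1,\ldots,L_{n_1})$ to the uniform distribution on $\Delta^{n_1}$. The remaining coordinates $L_{n_1+1},\ldots,L_n$, distributed as the tilted $\sL_{n_1+1},\ldots,\sL_n$ under the weight $\sF_{n_1}$, remain tight (the tilt being polynomial in $m$ and the base geometric parameters being strictly less than~$1$), and hence after division by $(m-\expect\sM-\expect|\sL|)\to\infty$ they vanish in probability; consequently they do not contribute to the limiting maximum.

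Moment convergence for the normalized maximum then follows from the weak convergence together with uniform integrability, since $(m-\expect\sM-\expect|\sL|)^{-1}\mymax{L}$ is bounded above by $m/(m-\expect\sM-\expect|\sL|) \to (1-\bar\kappa_1-\bar\sl)^{-1}<\infty$. The explicit formulas for the first two moments and the Gumbel limit as $n_1\to\infty$ are then immediate from Proposition~\ref{pr:max_simplex_cont} applied to the uniform law on $\Delta^{n_1}$. I expect the main obstacle to be the concentration step: producing a quantitative local-type estimate for the Poisson-plus-geometric sum $\sM+|\sL|$ that is uniform against the polynomial reweighting by $\sF_{n_1}$, together with matching lower-tail control on the denominator. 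This is precisely the tedious computation alluded to by the authors; once it is in place, the remainder of the argument is a faithful translation of the proof of Theorem~\ref{thm:cont}.
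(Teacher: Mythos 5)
The paper deliberately omits a detailed proof of Theorem~\ref{thm:4}, but the program it sketches in Section~\ref{sec:nonhomogen} --- a concentration step for $\sM+|\sL|$ under the polynomial weight $\sF_{n_1}$, followed by the simplex analysis of $\mymax{X}(m-\sM-|\sL|)$ and separate control of $\mymax{\sL}$ --- is exactly what you propose, and your translation of the proof of Theorem~\ref{thm:cont} with $(m-\kappa)$ replaced by $(m-\expect\sM-\expect|\sL|)$ is the intended route. Your concentration argument is sound: since $\kappa_1/\kappa_{n_1+1}<1$ is fixed, every $\sL_i$ has a moment generating function finite in a neighborhood of the origin, so Cram\'er/Chernoff gives exponential decay for $\{|\sM+|\sL|-\expect\sM-\expect|\sL||>\epsilon m\}$, which dominates the degree-$(n_1-1)$ polynomial reweighting and the $O(m^{n_1-1})$ lower bound on the denominator.

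The one step that needs repair is your treatment of the non-bottleneck coordinates. You argue that $L_{n_1+1},\ldots,L_n$ ``remain tight'' and hence vanish after rescaling. Tightness of each coordinate individually does not control $\max_{i>n_1}L_i$, and tightness of the maximum is simply false in the regime the theorem is meant to cover: with the ratios $\kappa_1/\kappa_i$ fixed, $\expect|\sL|$ is at most a constant times $n-n_1$, so $\bar\sl>0$ forces $n-n_1$ to grow linearly in $m$, and the maximum of order-$m$ many independent geometrics grows like $\log m$. The correct (and easy) fix, which the paper itself flags in the paragraph preceding the theorem, is a union bound: $\Pr[\mymax{\sL}>C\log m]\leq (n-n_1)\varrho^{C\log m}$ with $\varrho=\kappa_1/\kappa_{n_1+1}<1$, which for $C$ large beats the $O(m^{n_1-1})$ tilt by $\sF_{n_1}$; hence $\mymax{\sL}=O(\log m)=o(m-\expect\sM-\expect|\sL|)$ and the non-bottleneck queues indeed drop out of the limit. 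With that substitution for the tightness claim, your outline matches the authors' intended argument.
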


\section{Concluding Remarks}
\label{sec:cr}


Our study of a closed network aims to develop an explicit approximation for the maximum queue distribution, thereby bypassing the difficulty of computing a normalization constant, and to uncover the global structure of the corresponding product-form solution. This global structure is obscured by the combinatorial nature of the product-form solution, despite its compact form. To circumvent this difficulty, we employ an equivalent probabilistic representation of the product-form solution, which enables a dichotomy of operating regimes and an explicit characterization of the maximum queue distribution. 


To this end, Theorem~\ref{thm:disc} (Theorem~\ref{thm:3}) addresses the operating regime where the number of queues (with the highest utilization) is large and the customer pool is the bottleneck resource. In this regime, the maximum behaves asymptotically as if the queues (with the highest utilization) are independent geometric, but with a parameter that depends on the global characteristics of the network. In contrast, Theorem~\ref{thm:cont} (Theorem~\ref{thm:4}) tackles the setting in which the number of queues (with the highest utilization) is finite and the dependencies among these queues influence the distribution of the maximum. Our analysis shows that the joint distribution of the scaled queues (with the highest utilization) is asymptotically uniform over a continuous simplex, providing a precise characterization of the maximum.



The probabilistic representation~\eqref{eq:cdfprobrep}, which relates the product-form solution to a uniform distribution on a simplex of random dimension, provides a foundation for the study of global performance measures beyond the maximum. For example, under the regimes described in Theorem~\ref{thm:disc}, one can characterize percentiles that scale in the same order as the maximum (i.e., $\ln m$). Similarly, under the conditions of Theorem~\ref{thm:cont}, where the number of queues remains fixed, the relevant characteristics of their scaled joint distribution are asymptotically equivalent to those of the uniform distribution on a continuous simplex.

Finally, we note that the regime in Theorem~\ref{thm:cont} corresponds to the situation that arises in single-cell organisms, where the number of single-server queues (filaments) is relatively small and much smaller than the number of monomers. 
Under these conditions, the distribution of the maximum---particularly its mean and variance--approximately follows the Gumbel distribution for large $n$, consistent with the approximation used in~\cite{RMJ23,MRR24}. However, Theorem~\ref{thm:cont} also reveals that the convergence of the variance to the limiting Gumbel value is rather slow, as it occurs at a rate of order $O(H_1^2(n)/n)=O(\ln^2 n/n)$; see Remark~\ref{rm:thm:cont}.

\section{Proofs} \label{sec:proofs}

\subsection{Preliminary Results} \label{sec:prelimproof}

This section contains proofs of Lemma~\ref{lemma:sum}, Proposition~\ref{pr:max_simplex_discrete} and Proposition~\ref{pr:max_simplex_cont}.

\begin{proof}[Proof of Lemma~\ref{lemma:sum}]
Let $m:= n \expect Y_1$ so that $p=m/(n+m)$ and $\sigma^2 = m(n+m)/n^2$. Because $Y_i$'s are independent geometric, their sum has negative binomial distribution: 
    \begin{align}
        \Pr[|Y| = \floor{m} +k] & = \binom{\floor{m}+k+n-1}{n-1} (1-p)^n p^{\floor{m} +k} \notag \\
        &=  \frac{n^n m^{\floor{m}+k}}{(m+n)^{\floor{m}+k+n}} \frac{n}{\floor{m}+k+n} \frac{(\floor{m}+n)!}{\floor{m}! \, n!}  \left( \prod_{j=1}^k \frac{\floor{m}+j+n}{\floor{m}+j} \right)^{\sgn(k)} \notag \\
        &= \frac{n}{\floor{m}+k+n} \frac{n^n m^{\floor{m}}}{n! \, \floor{m}!} \frac{(\floor{m}+n)!}{(m+n)^{\floor{m}+n}} \left( \prod_{j=1}^k \frac{m(\floor{m}+j+n)}{(\floor{m}+j)(m+n)} \right)^{\sgn(k)}. \label{eq:proofDS100}
    \end{align}
The product~\eqref{eq:proofDS100} is not smaller than the unity, and hence $\ln(1+x) \leq x$ yields
\begin{align}
    \ln \prod_{j=1}^{|k|} \frac{m(\floor{m}+j+n)}{(\floor{m}+j)(m+n)} &= \sum_{j=1}^{|k|} \ln\left(1 + \frac{n(\floor{m}-m+j)}{(\floor{m}+j)(m+n)} \right) \notag \\
    &\leq \sum_{j=1}^{|k|} \frac{n(\floor{m}-m+j)}{(\floor{m}+j)(m+n)} \notag \\
    &\leq \frac{|k|(|k|+1)}{2n \sigma^2} \notag \\
    &\to 0, \label{eq:proofDS200}
\end{align}
as $\min\{n,m\} \to \infty$, for all $|k| \ll \sqrt{n}\sigma$. The terms with factorials in~\eqref{eq:proofDS100} can be estimated using the Sterling approximation:
\begin{align*}
     \frac{n^n m^{\floor{m}}}{n! \, \floor{m}!} \frac{(\floor{m}+n)!}{(m+n)^{\floor{m}+n}} &\sim \frac{\sqrt{\floor{m} +n}}{\sqrt{2\pi n \floor{m}}} e^{(\floor{m}+n) \ln \frac{\floor{m} + n}{m+n} - \floor{m} \ln \frac{\floor{m}}{m}} \\
     &\sim \sqrt{\frac{m+n}{2\pi n m}},
\end{align*}
as $\min\{n,m\} \to \infty$. Finally, combining the preceding equation with~\eqref{eq:proofDS100} and~\eqref{eq:proofDS200} results in
\[
\Pr[|Y| = \floor{m} +k ] \sim \sqrt{\frac{n}{2\pi(m+n) m}} = \frac{1}{\sqrt{2\pi n \sigma^2}},
\]
as $\min\{n,m\} \to \infty$, for all $|k| \ll \sqrt{n} \sigma$.
\end{proof}

\begin{proof}[Proof of Proposition~\ref{pr:max_simplex_discrete}]
Note that for any $l \in \Delta^n_m$ one has $\Pr[X = l]=\Pr[Y = l \,| \, |Y|=m]$, where $Y=(Y_1,\ldots, Y_n)$ is a vector with independent geometrically distributed components whose parameter can be chosen to be $p=m/(n+m)$; in that case, $\var(Y_1) = \sigma^2 = m(m+n)/n^2$. The condition $\ln n \gg \ln(1+n/m)$ implies that $m \to\infty$ when $n \to\infty$, i.e., $n\to\infty$ implies $\min\{n,m\} \to \infty$. For notational simplicity let
\[
x_\epsilon : = (1+\epsilon) \frac{\ln n}{\ln (1+n/m)} = (1+\epsilon) \frac{\ln n}{-\ln p}.
\]
and 
\begin{align*}
y_\epsilon &:= (1/2+\epsilon) \frac{\ln(m+n) + \ln m + \ln n}{\ln(1+n/m)} \\
&=  (1/2+\epsilon) \frac{\ln n}{- \ln p }  \frac{\ln(m+n) + \ln m + \ln n}{\ln n} \\
&\geq x_\epsilon,
\end{align*}
where the inequality holds for $\epsilon>0$.

An upper bound can be derived from the union bound, for $\epsilon>0$:
\begin{align}
    \Pr[\mymax{X} > x_\epsilon] & = \Pr\left[\mymax{Y} > x_\epsilon \, \big| \, |Y| = m\right] \notag \\
    &\leq n \Pr[Y_1 > x_\epsilon \, | \, |Y| = m] \notag\\
    &\leq n \left( \sum_{k = \ceil{x_\epsilon}}^{\ceil{y_\epsilon}} \Pr[Y_1 = k \, | \, |Y| = m] + \Pr[Y_1 > \ceil{y_\epsilon} \, | \, |Y| = m] \right) \notag \\
    &\leq n \left( \sum_{k = \ceil{x_\epsilon}}^{\ceil{y_\epsilon}} \frac{\Pr[Y_1 = k] \, \Pr[|Y| - Y_1 = m-k]}{\Pr[|Y| = m]} + \frac{\Pr[Y_1 > \ceil{y_\epsilon} ]}{\Pr[|Y| = m]} \right) \notag \\
    &\leq n \Pr[Y_1 \geq \ceil{x_\epsilon}] \max_{\ceil{x_\epsilon} \leq k \leq \ceil{y_\epsilon}}  \frac{\Pr[|Y| - Y_1 = m-k]}{\Pr[|Y| = m]} + n \frac{\Pr[Y_1 > \ceil{y_\epsilon} ]}{\Pr[|Y| = m]}, \label{eq:proofdisc200}
\end{align}
where the third inequality is due to the independence of $\{Y_i\}_{i=1}^n$. Due to the choice of $x_\epsilon$ and $y_\epsilon$ as well as Lemma~\ref{lemma:sum} the first term vanishes: $n \Pr[Y_1 \geq \ceil{x_\epsilon}] \leq n e^{x_\epsilon \ln p} = n^{-\epsilon}\to 0$ and the maximum tends to $1$, both as $n\to\infty$. Similarly, the choice of~$y_\epsilon$ and Lemma~\ref{lemma:sum} imply that the second term also vanishes, as $n\to\infty$:
\[
n \frac{\Pr[Y_1 > \ceil{y_\epsilon} ]}{\Pr[|Y| = m]} \leq \frac{n e^{{y_\epsilon} \ln p} }{\Pr[|Y| = m]} = \frac{n (mn(m+n))^{-1/2-\epsilon} }{\Pr[|Y| = m]} \to 0;
\]
here, $\Pr[|Y|=m]$ is of the order $n^{-1/2} \sigma^{-1} = n^{1/2} (m(m+n))^{-1/2}$. Hence, $\Pr[\mymax{X} > x_\epsilon] \to 0$, as $n\to\infty$, for any $\epsilon>0$.

In order to obtain a lower bound, we introduce, for $\epsilon>0$,
\begin{equation}
N:= \sum_{i=1}^n \ind{X_i > x_{-\epsilon}}, \label{eq:Ndef}
\end{equation}
and observe that $\{N=0\}$ is equivalent to $\{\mymax{X} \leq x_{-\epsilon}\}$, $\epsilon>0$. This and Markov inequality yield
\begin{align*}
    \Pr\left[\mymax{X} > x_{-\epsilon}\right] &= 1 - \Pr[N=0] \\
    &= 1 - \Pr[\expect N - N = \expect N] \\
    &\geq 1- \Pr[|\expect N - N| \geq \expect N] \\
    &\geq 2 - \frac{\expect N^2}{(\expect N)^2}. 
\end{align*}
The second moment of~$N$ can be expressed in terms of the first moment from~\eqref{eq:Ndef}:
\begin{align*}
    \expect N^2 &= n \Pr[X_1 > x_{-\epsilon}] + n(n-1) \Pr[X_1 > x_{-\epsilon}, \, X_2 > x_{-\epsilon}] \\
    &= \expect N + n(n-1) \Pr[Y_1 > x_{-\epsilon}, \, Y_2 > x_{-\epsilon} \, |\, |Y|=m],
\end{align*}
which implies a lower bound
\begin{equation}
    \Pr\left[\mymax{X} > x_{-\epsilon}\right] \geq 2 - \frac{1}{\expect N} - \frac{n-1}{n} \frac{\Pr[Y_1 > x_{-\epsilon}, \, Y_2 > x_{-\epsilon} \, |\, |Y|=m]}{(\Pr[Y_1 > x_{-\epsilon}\, |\, |Y|=m])^2}. \label{eq:proofdisc500}
\end{equation}
The first moment of $N$ can estimated as follows:
\begin{align}
\expect N &= n \Pr[X_1> x_{-\epsilon}] \notag \\
&= n \, \Pr[Y_1 > x_{-\epsilon} \, | \, |Y| = m] \notag \\
&\geq n \sum_{k = \floor{x_{-\epsilon}} +1 }^{\ceil{y_\epsilon}} \frac{\Pr[Y_1=k] \, \Pr[|Y|-Y_1 = m-k]}{\Pr[|Y|=m]} \notag \\
&\geq n (\Pr[Y_1 > x_{-\epsilon}] - \Pr[Y_1 > \ceil{y_\epsilon}]) \, \min_{\floor{x_{-\epsilon}} \leq k \leq \ceil{y_\epsilon}} \frac{\, \Pr[|Y|-Y_1 = m-k]}{\Pr[|Y|=m]}. \label{eq:proofdisc400}
\end{align}
Due to the choice of $x_{-\epsilon}$ and $y_\epsilon$, one has 
\begin{align*}
n (\Pr[Y_1 > x_{-\epsilon}] - \Pr[Y_1 > \ceil{y_\epsilon}]) &\geq n(e^{x_{-\epsilon} \ln p} - e^{y_\epsilon \ln p}) \\
&= n \left(n^{-1+\epsilon} - (mn(m+n))^{-1/2-\epsilon}\right) \\
&\to \infty, 
\end{align*}
as $n\to\infty$, while Lemma~\ref{lemma:sum} implies that the minimum converges to~$1$, as $n\to\infty$. Hence, as $n\to\infty$,
\begin{equation}
    \expect N \to \infty. \label{eq:expectNtoinfty}
\end{equation}
For the last term in~\eqref{eq:proofdisc500}, we estimate the nominator and the denominator separately. For the former, a similar argument used to obtain~\eqref{eq:proofdisc200} yields 
\begin{align}
    &\frac{\Pr[Y_1 > x_{-\epsilon}, \, Y_2 > x_{-\epsilon} \, |\, |Y|=m]}{(\Pr[Y_1>x_{-\epsilon}])^2} \notag \\
    & \qquad \qquad \leq \sum_{k_1,k_2 = \ceil{x_{-\epsilon}} }^{\ceil{y_\epsilon}} \frac{\Pr[Y_1 =k_1, \, Y_2=k_2, \,|Y|-Y_1=m-k_1-k_2]}{\Pr[|Y|=m] \, (\Pr[Y_1>x_{-\epsilon}])^2} + \frac{1}{\Pr[|Y|=m]} \left(\frac{\Pr[Y_1> \ceil{y_\epsilon}]}{\Pr[Y_1>x_{-\epsilon}]} \right)^2 \notag \notag \\
    & \qquad \qquad \leq \max_{ 2 \ceil{x_{-\epsilon}} \leq k \leq 2 \ceil{y_\epsilon}} \frac{\Pr[|Y|-Y_1-Y_2=m-k]}{\Pr[|Y|=m]} + \frac{1}{\Pr[|Y|=m]} \left(\frac{\Pr[Y_1> \ceil{y_\epsilon}]}{\Pr[Y_1>x_{-\epsilon}]} \right)^2 \notag \\
    & \qquad \qquad \to 1, \label{eq:proofdisc700}
\end{align}
as $n\to\infty$, where the limit follows from Lemma~\ref{lemma:sum} and the choices of $x_{-\epsilon}$ and $y_\epsilon$. On the other hand, the denominator can be bounded as in~\eqref{eq:proofdisc400}:
\begin{align}
    \frac{\Pr[Y_1>x_{-\epsilon} \, | \, |Y|=m]}{\Pr[Y_1>x_{-\epsilon}]} &\geq \left(1 - \frac{\Pr[Y_1> \ceil{y_\epsilon}]}{\Pr[Y_1>x_{-\epsilon}]} \right) \min_{\ceil{x_{-\epsilon}} \leq  k \leq \ceil{y_\epsilon}} \frac{\Pr[|Y|-Y_1=m-k]}{\Pr[|Y|=m]} \notag \\
    &\to 1, \label{eq:proofdisc800}
\end{align}
as $n\to\infty$. Finally, combining~\eqref{eq:proofdisc500}, \eqref{eq:expectNtoinfty}, \eqref{eq:proofdisc700} and~\eqref{eq:proofdisc800} yields $\Pr[\mymax{X} > x_{-\epsilon}] \to 1$, as $n\to\infty$, for any $\epsilon>0$. This completes the proof of the proposition.    
\end{proof}

\begin{proof}[Proof of Proposition~\ref{pr:max_simplex_cont}]
Let $Z=(Z_1,\ldots, Z_n) \in \real^n_0$ be a vector with independent (unit-rate, without loss of generality) exponentially distributed components. We exploit the representation $X \stackrel{d}{=} Z/|Z|$ to obtain the first two moments of~$\mymax{X}$. In particular, the memoryless property implies
\begin{equation}
\mymax{X} \stackrel{d}{=} \sum_{i=1}^n \frac{Z_i/i}{|Z|}. \label{eq:maxXrep}
\end{equation}
The first moment of $\mymax{X}$ follows from~\eqref{eq:maxXrep} and symmetry:
\[
\expect \mymax{X} = \expect \left[\frac{Z_1}{|Z|}\right]  \sum_{i=1}^n \frac{1}{i}  = \frac1n H_1(n).
\]

For the second moment, a similar argument results in 
\begin{align}
    \expect \mymax{X}^2 &= \expect \left[ \sum_{i=1}^n \frac{Z_i^2}{i^2 |Z|^2} \right] + \expect \left[ \sum_{i=1}^n \sum_{j\not= i} \frac{Z_i Z_j}{ij |Z|^2} \right] \notag \\
    &= \expect \left[ \frac{Z_1^2}{|Z|^2} \right]\, H_2(n) + \expect \left[ \frac{Z_1Z_2}{|Z|^2} \right] (H_1^2(n) - H_2(n)) \notag \\
    &= \expect \left[\frac{Z_1^2}{|Z|^2} \right] (2H_2(n)- H_1^2(n)) + \frac12 \expect \left[\frac{(Z_1+Z_2)^2}{|Z|^2}\right] (H_1^2(n) - H_2(n)), \label{eq:maxX2}
\end{align}
where the last equality is based on $2Z_1Z_2 = (Z_1+Z_2)^2-Z_1^2-Z_2^2$ and symmetry. Let $Z_{-k} = Z- (Z_1+\cdots+Z_k)$. Then the two terms in the preceding equation can be evaluated as follows:
\begin{align}
    \expect \left[\frac{Z_1^2}{|Z|^2}\right] &= \int_0^1 \Pr[Z_1 > \sqrt{x} |Z|] \, \drm x \notag \\
    &= \int_0^1 \expect e^{-\frac{\sqrt{x}}{1-\sqrt{x}}|Z_{-1}|} \, \drm x \notag \\
    &= \int_0^1 \left( \expect e^{-\frac{\sqrt{x}}{1-\sqrt{x}}|Z_1|} \right)^{n-1} \drm x \notag \\
    &= \int_0^1 \left( 1-\sqrt{x} \right)^{n-1} \drm x \notag \\
    &=\frac{2}{n(n+1)}, \label{eq:Z1Z}
\end{align}
and
\begin{align}
   \expect\left[\frac{(Z_1+Z_2)^2}{|Z|^2} \right] &= \int_0^1 \Pr[Z_1+Z_2 > \sqrt{x} |Z_{-2}|] \, \drm x \notag \\
   &= \int_0^1 \left(\expect e^{-\frac{\sqrt{x}}{1-\sqrt{x}}|Z_{-2}|} + \frac{\sqrt{x}}{1-\sqrt{x}}\expect\left[|Z_{-2}| e^{-\frac{\sqrt{x}}{1-\sqrt{x}}|Z_{-2}|}\right] \right) \drm x \notag \\
   &= \frac{2}{(n-1)n} + \int_0^1 \frac{\sqrt{x}}{1-\sqrt{x}} \frac{\drm}{\drm \frac{\sqrt{x}}{1-\sqrt{x}}} \expect e^{-\frac{\sqrt{x}}{1-\sqrt{x}}|Z_{-2}|} \, \drm x \notag \\
   &= \frac{2}{(n-1)n} + \int_0^1 \frac{\sqrt{x}}{1-\sqrt{x}} \frac{\drm}{\drm \frac{\sqrt{x}}{1-\sqrt{x}}} \frac{1}{(1+\frac{\sqrt{x}}{1-\sqrt{x}})^{n-2}} \, \drm x \notag \\
   &= \frac{2}{(n-1)n} - (n-2) \int_0^1 \sqrt{x} (1-\sqrt{x})^{n-2} \, \drm x \notag \\
   &= \frac{2}{(n-1)n} + \frac{4(n-2)}{(n-1)n(n+1)} \notag \\
   &= \frac{6}{n(n+1)}. \label{eq:Z12Z}
\end{align}
Substituting~\eqref{eq:Z1Z} and~\eqref{eq:Z12Z} in~\eqref{eq:maxX2} yields
\[
\expect \mymax{X}^2 = \frac{H^2_1(n) + H_2(n)}{n(n+1)},
\]
and the expression for $\var(\mymax{X})$ follows.

In the last part of the proof, we show~\eqref{eq:simplexGumbel}. Here again we use $n X \stackrel{d}{=} Z/(|Z|/n)$ and utilize the CLT to argue that $|Z|/n \approx 1$. In particular, we develop two asymptotically matching bounds. For $c>0$, an upper bound follows from the exponential nature of $Z_i$'s and the CLT: 
\begin{align*}
    \Pr[n\mymax{X} \leq x +\ln n] &\leq \Pr[\mymax{Z} \leq (1+c/\sqrt{n})(x+\ln n)] + \Pr[|Z| > n + c\sqrt{n}] \\
    &= \left(1 -  \frac{e^{-(1+c/\sqrt{n})}}{n^{1+c/\sqrt{n}}}\right)^n + \Pr[|Z| > n + c\sqrt{n}] \\
    &\to e^{-e^{-x}} + \Phi(-c),
\end{align*}
as $n\to\infty$, where $\Phi$ is the standard normal CDF. The lower bound is similar conceptually:
\begin{align*}
    \Pr[n\mymax{X} \leq x +\ln n] &\geq \Pr[\mymax{Z} \leq (1-c/\sqrt{n})(x+\ln x), \, |Z| > n -c\sqrt{n}] \\
    &\geq \Pr[\mymax{Z} \leq (1-c/\sqrt{n})(x+\ln x)] - 1 + \Pr[|Z| > n -c\sqrt{n}] \\
    &= \left(1 -  \frac{e^{-(1-c/\sqrt{n})}}{n^{1-c/\sqrt{n}}}\right)^n - \Pr[|Z|\leq n-c\sqrt{n}] \\
    &\to  e^{-e^{-x}} - \Phi(-c),
\end{align*}
as $n\to\infty$. Finally, by letting $c\to\infty$ in the two bounds, one derives~\eqref{eq:simplexGumbel}.    
\end{proof}

\subsection{Proof of Lemma~\ref{lemma:concentration}} \label{sec:mainproof}

\begin{proof} We start with a representation of scaled summands in $\expect[\sF_n(m-\sM)]$:
\begin{align*}
    \frac{e^n n^{-n}}{\Pr[\sM=m]} \, \sF_n(i) \, \Pr[m-\sM=i] &= e^n n^{-n} \frac{m!}{e^{-\kappa} \kappa^m} (n-1+i)_{n-1} \frac{\kappa^{m-i}}{(m-i)!} e^{-\kappa} \\
    &= e^n n^{-n} \frac{(n-1+i)! \, m!}{i! \, (m-i)!} \kappa^{-i} \\
    &= h_{m,n}(i) \, e^{-g_{m,n}(i)},
\end{align*}
where 
\begin{align}
g_{m,n}(i) &:= -(n+i) \ln(n+i) +n\ln n -m \ln m + i \ln i + (m-i) \ln(m-i) + i \ln \kappa + i \notag\\
&= -n \ln\left(1+\frac{i}{n}\right) + m\ln\left(1-\frac{i}{m}\right) + i  -i \ln\left(1+\frac{n}{i} - \frac{i}{m} - \frac{n}{m}\right) + i \ln \frac{\kappa}{m}, \label{eq:gmn}
\end{align}
and
\begin{equation}
h_{m,n}(i) := \frac{(n+i)! \, m!}{(n+i)\invexcl\, m\invexcl} \frac{i\invexcl \, (m-i)\invexcl}{i! \, (m-i)!} \sqrt{\frac{m}{(m-i)i(n+i)}}; \label{eq:hmn}
\end{equation}
here $x\invexcl := \sqrt{2\pi x} x^x e^{-x}$ is Sterling's approximation for $x!$. The function $g_{m,n}: [0,m] \to \real$ is convex, because
\[
g''_{m,n}(x) = \frac{x^2+mn}{(m-x)x(n+x)} >0,
\]
for $x\in (0,m)$, and it achieves a unique minimum at the root of $x^2 + x(n +\kappa-m)-nm$, because $g'_{m,n}(x) = -\ln(n+x) + \ln x - \ln(m-x) + \ln \kappa$. 

Now, we consider the three asymptotic regimes from the statement of the lemma:
\begin{itemize}
    \item ($\bar\kappa>1$ and $m \gg n \gg 1$) The unique minimum of $g_{m,n}(\cdot)$ is proportional to~$n$. In particular, the minimum is at $n \zeta_n \in [0,m]$, where $\zeta_n$ is the root of
\begin{equation}
\zeta_n^2 \frac{n}{m} + \zeta_n \left(\frac{n}{m}+\frac{\kappa}{m} -1 \right) - 1, \label{eq:zetaroot1}
\end{equation}
and, hence, $\zeta_n\to \zeta=1/(\bar\kappa-1)>0$, as $n\to\infty$ (recall $m \gg n$). The value of the minimum is also proportional to~$n$ (rather than~$m$):
\begin{align*}
\frac1n g_{m,n}(n\zeta_n) 
&\to - \ln \frac{\bar\kappa}{\bar\kappa-1} \\
&= g(\zeta),
\end{align*}
as $n\to\infty$, where
\[
g(x):= -\ln(1+x) - x\ln \left(1+\frac1x \right) + x \ln \bar\kappa,
\]
and $\zeta=1/(\bar\kappa-1)>0$ is the unique minimizer of $g(\cdot)$ on $[0,\infty)$. Note that, for large values of the argument, $g(\cdot)$ consists of a logarithmic term, a constant term, and a linear term. That motivates a linear bound based on~\eqref{eq:gmn}. In particular, for any $\delta>0$ there exists $c_\delta >0$ such that, for all $i > c_\delta n$,
\begin{align}
\frac{g_{m,n}(i)}{i} &= - \frac{n}{i} \ln \left(1 + \frac{i}{n} \right) - \ln\left(1 + \frac{n}{i}\right) + \left(\frac{m}{i}-1 \right) \ln \left(1 - \frac{i}{m} \right) + 1   + \ln \frac{\kappa}{m} \notag \\
&\geq -\delta + \ln\frac{\kappa}{m}, \label{eq:case1linear}
\end{align}
because the sum of the first two terms is decreasing to $0$ as $i/n \to \infty$, and the sum of the next two terms is nonnegative.

Now, we consider three limits. First, for $\epsilon>0$, we use
\[
\sum_{i:\, |i-\zeta n| \leq \epsilon n} h_{m,n}(i)\,  e^{-g_{m,n}(i)} \geq h_{m,n}(\floor{n \zeta})\,  e^{-g_{m,n}(\floor{n \zeta})}
\]
to obtain
\begin{align}
    \frac1n \ln\left(\frac{e^n n^{-n}}{\Pr[\sM=m]} \,\expect[\sF_n(m-\sM)  \ind{ |(m-\sM)/n - \zeta| \leq \epsilon} ]\right) &\geq \frac1n \ln h_{m,n}(\floor{n\zeta}) - \frac1n g_{m,n}(\floor{n\zeta}) \notag \\
    &\to - g(\zeta), \label{eq:case1eq100}
\end{align}
as $n\to\infty$, where the limit is also due to $n h_{m,n}(\floor{n \zeta}) \to (\zeta(1+\zeta))^{-1} = (1-\bar\kappa)^2/\bar\kappa$, as $n\to\infty$. Second, for $0 < \epsilon < c$, we utilize
\[
\sum_{i:\, \epsilon n < |i-\zeta n| \leq cn} h_{m,n}(i)\,  e^{-g_{m,n}(i)} \leq 2cn \, \max_{i:\, \epsilon n < |i-\zeta n| \leq cn} h_{m,n}(i) \, \max_{i:\, \epsilon n < |i-\zeta n| \leq cn} e^{-g_{m,n}(i)}
\]
to derive
\begin{align}
    \frac1n \ln &\left(\frac{e^n n^{-n}}{\Pr[\sM=m]} \,\expect[\sF_n(m-\sM)  \ind{ \epsilon < |(m-\sM)/n - \zeta| \leq c} ]\right) \notag \\
    &\leq \frac1n \ln (2cn) + \max_{i:\, \epsilon n < |i-\zeta n| \leq cn} \frac1n \ln h_{m,n}(i) - \min_{i:\, \epsilon n < |i-\zeta n| \leq cn} \frac1n  g_{m,n}(i) \notag \\
    &\to - \min_{x:\, \epsilon<|x-\zeta|\leq c} g(x)\notag \\
    &< - g(\zeta), \label{eq:case1eq200}
\end{align}
as $n\to\infty$, where the limit holds because $h_{m,n}$ is polynomial in~$n$ (see~\eqref{eq:hmn}), and the last inequality holds because $\zeta$ achieves the minimum of $g(\cdot)$. Third, for $0<\delta< \ln\bar\kappa$, select $c > \max\{c_\delta,\zeta\}$ such that $c(-\delta + \ln \bar\kappa)> g(\zeta)$. Then 
\begin{align}
    \frac1n \ln &\left(\frac{e^n n^{-n}}{\Pr[\sM=m]} \,\expect[\sF_n(m-\sM)  \ind{ |(m-\sM)/n - \zeta| > c} ]\right) \notag \\
    &\leq \frac1n \ln \left(\sum_{i: i> cn} h_{m,n}(i)\, e^{-i (-\delta + \ln \frac{\kappa}{m})}\right) \notag \\
    &\leq \frac1n \max_{i: \, i>cn} \ln h_{m,n}(i) - \frac{cn}{n} \left(-\delta +\ln \frac{\kappa}{m} \right) + \frac1n \ln \left(1- e^{\delta - \ln\frac{\kappa}{m}} \right) \notag \\
    &\to -c (-\delta + \ln \bar\kappa) \notag \\
    &< - g(\zeta), \label{eq:case1eq300}
\end{align}
as $n\to\infty$, due to the choice of $\delta$ and $c$. Finally, combining~\eqref{eq:case1eq100}, \eqref{eq:case1eq200}, and~\eqref{eq:case1eq300} results in
\[
\limsup_{n\to\infty}\frac1n \ln \frac{\expect[\sF_n(m-\sM)  \ind{ |(m-\sM)/n - \zeta| > \epsilon} ]}{\expect[\sF_n(m-\sM)  \ind{ |(m-\sM)/n - \zeta| \leq \epsilon} ]} < 0,
\]
from which~\eqref{eq:con100} follows.

    \item ($n/m \to \bar n>0$) In this regime, $g_{m,n}(\cdot)$ has a unique minimum at $n \zeta_n \in [0,m]$, where~$\zeta_n$ is the root of~\eqref{eq:zetaroot1} (as in the previous case), but $\zeta_n \to\zeta$, as $n\to\infty$, where $\zeta$ is the positive root of $\zeta^2 +\zeta(\bar n +\bar\kappa -1)-1$. The value of the minimum $g_{m,n}(n\zeta_n)$ is also order-$n$:
\begin{align*}
\frac1n g_{m,n}(n\zeta_n) 
&\to - \ln\left(1+\zeta\right) + \bar n^{-1} \ln\left(1- \bar n \zeta\right) + \zeta - \zeta \ln\left(1+ \zeta^{-1} - \bar n \zeta  - \bar n\right) + \zeta \ln \bar\kappa \\
&= - \ln(1+\zeta) + \bar n^{-1} \ln(1-\bar n \zeta) + \zeta \\
&= g(\zeta), 
\end{align*}
as $n\to\infty$, where, for $x\in[0,\bar n^{-1}]$,
\[
g(x) := -\ln(1+x) + \bar n^{-1} \ln(1 - \bar n x)   + x - x \ln\left(1+ x^{-1} - \bar n x - \bar n \right) +x \ln \bar\kappa,
\]
and $g(\cdot)$ has a unique minimizer $\zeta$ on $[0,\bar n^{-1}]$. A lower bound is obtained by considering a single term in the expectation $\expect[\sF_n(m-\sM)]$. In particular, for any $\epsilon>0$, 
\begin{align}
    \frac1n \ln\left(\frac{e^n n^{-n}}{\Pr[\sM=m]} \,\expect[\sF_n(m-\sM)  \ind{ |(m-\sM)/n - \zeta| \leq \epsilon} ]\right) &\geq \frac1n \ln h_{m,n}(\floor{n\zeta_n}) - \frac1n g_{m,n}(\floor{n\zeta_n}) \notag \\
    &\to - g(\zeta), \label{eq:case2eq100}
\end{align}
as $n\to\infty$, where the limit is due to $h_{m,n}(\floor{n\zeta_n})$ being polynomial in~$n$ (or equivalently~$m$). For an upper bound, we use the fact that the total number of summands is~$m$:
\begin{align}
    \frac1n \ln &\left(\frac{e^n n^{-n}}{\Pr[\sM=m]} \,\expect[\sF_n(m-\sM)  \ind{ |(m-\sM)/n - \zeta| > \epsilon} ]\right) \notag \\
    &\qquad \leq \frac{1}{n}\ln m + \frac{1}{n}\ln \left( \max_i  h_{m,n}(i)\right) - \min_{i:\, |i/n - \zeta|> \epsilon}  \frac1n g_{m,n}(i) \notag \\
    &\qquad \to - \min_{x:\, |x - \zeta|> \epsilon } g(x) \notag \\
    &\qquad < - g(\zeta), \label{eq:case2eq200}
\end{align}
as $n\to\infty$, for $\epsilon>0$, where the last inequality follows from $\zeta$ being the unique minimizer. Combining~\eqref{eq:case2eq100} and~\eqref{eq:case2eq200} yields
\[
\limsup_{n\to\infty}\frac1n \ln \frac{\expect[\sF_n(m-\sM)  \ind{ |(m-\sM)/n - \zeta| > \epsilon} ]}{\expect[\sF_n(m-\sM)  \ind{ |(m-\sM)/n - \zeta| \leq \epsilon} ]} < 0,
\]
which, in turn, implies~\eqref{eq:con100}.
    
    \item ($\kappa/m \to \bar\kappa < 1$ and $m \gg n$) The function $g_{m,n}(\cdot)$ achieves a unique minimum at $m \zeta_m \in [0,m]$, where $\zeta_m$ is the root of
\[
\zeta_m^2  + \zeta_m \left(\frac{n}{m}+\frac{\kappa}{m} -1 \right) - \frac{n}{m},
\]
and, hence, $\zeta_m\to 1-\bar\kappa>0$, as $m\to\infty$. The value of the minimum is also on the order-$m$ scale:
\begin{align*}
\frac1m g_{m,n}(m\zeta_m) &\to  \ln \bar\kappa + 1 - \bar\kappa \\
&= g(1-\bar\kappa),
\end{align*}
as $m\to\infty$, where, for $x\in[0,1]$,
\[
g(x) := \ln(1-x) + x + x \ln \frac{\bar\kappa}{1-x},
\]
which has a unique minimum at $1-\bar\kappa \in[0,1]$. The lower and upper bounds are similar to those from the previous case. For $\epsilon>0$, a lower bound follows by considering a single element of the sum:
\begin{align*}
    \frac1m \ln\left(\frac{e^n n^{-n}}{\Pr[\sM=m]} \,\expect[\sF_n(m-\sM)  \ind{ |\sM/m - \bar\kappa| \leq \epsilon} ] \right) 
&\geq \frac{\ln h_{m,n}(\floor{m \zeta_m})}{m} - \frac1m g_{m,n}(\floor{m \zeta_m}) \\
&\to -\ln \bar\kappa - 1 + \bar\kappa, 
\end{align*}
as $m\to\infty$; while, for an upper bound, we maximize $h_{m,n}$ and minimize $g_{m,n}$ for each element:
\begin{align*}
    \frac1m \ln &\left( \frac{e^n n^{-n}}{\Pr[\sM=m]} \,\expect[\sF_n(m-\sM)  \ind{ |\sM/m - \bar\kappa| > \epsilon} ] \right) \\
    & \qquad \leq \frac{\ln m}{m} + \frac{\ln \max_i h_{m,n}(i)}{m} - \min_{i:\, |i - (1-\bar\kappa) m|> \epsilon m}  \frac1m g_{m,n}(i) \\
    &\qquad \to - \min_{x:\, |x - 1+\bar\kappa|> \epsilon } g(x) \\
    & \qquad < -\ln \bar\kappa - 1 + \bar\kappa,
\end{align*}
as $m\to\infty$, because $1-\bar\kappa$ is the unique minimum of~$g(\cdot)$. The upper and the lower bound yield~\eqref{eq:con200}.
\end{itemize}
This completes the proof of the lemma.
\end{proof}

%

\bibliographystyle{plain}
\bibliography{petarbib}

@STRING{bstj = "Bell Syst. Tech. J."}

@STRING{jacm = "J. ACM"}

@STRING{mathor = "Math. Oper. Res."}

@STRING{or = "Oper. Res."}

@STRING{questa = "Queueing Syst. Theory Appl."}

@STRING{orletters = "Oper. Res. Lett."}

@STRING{ms = "Management Sci."}

@book{de2010extreme,
  title={Extreme Value Theory: An Introduction},
  author={de Haan, L. and Ferreira, A.},
  series={Springer Series in Operations Research and Financial Engineering},
  year={2010},
  publisher={Springer New York}
}

@Article{ADW13,
  author = 	 {J. Anselmi and B. D'Auria and N. Walton},
  title = 	 {Closed Queueing Networks Under Congestion: {N}onbottleneck Independence and Bottleneck Convergence},
  journal = 	 mathor,
  year = 	 {2013},
  OPTkey = 	 {},
  volume = 	 {38},
  number = 	 {3},
  pages = 	 {469-491},
  OPTmonth = 	 {},
  OPTnote = 	 {},
  OPTannote = 	 {}
}

@Article{Kog95,
  author = 	 {Y. Kogan},
  title = 	 {Another approach to asymptotic expansions for large closed queueing networks},
  journal = 	 orletters,
  year = 	 {1992},
  OPTkey = 	 {},
  volume = 	 {11},
  number = 	 {5},
  pages = 	 {317-321},
  OPTmonth = 	 {},
  OPTnote = 	 {},
  OPTannote = 	 {}
}

@Article{MMi82,
  author = 	 {J. Mc{K}enna and D. Mitra},
  title = 	 {Integral Representations and Asymptotic Expansions for Closed {M}arkovian Queueing Networks: {N}ormal Usage},
  journal = 	 bstj,
  year = 	 {1982},
  OPTkey = 	 {},
  OPTvolume = 	 {61},
  OPTnumber = 	 {5},
  OPTpages = 	 {661-683},
  OPTmonth = 	 {},
  OPTnote = 	 {},
  OPTannote = 	 {}
}

@Article{KnT90,
  author = 	 {C. Knessl and C. Tier},
  title = 	 {Asymptotic expansion for large closed queuing networks},
  journal = 	 jacm,
  year = 	 {1990},
  OPTkey = 	 {},
  volume = 	 {37},
  number = 	 {1},
  pages = 	 {144-174},
  OPTmonth = 	 {},
  OPTnote = 	 {},
  OPTannote = 	 {}
}

@Article{BBK99,
  author = 	 {A. Berger and L. Bregman and Y. Kogan},
  title = 	 {Bottleneck analysis in multiclass closed queueing networks and its application},
  journal = 	 questa,
  year = 	 {1999},
  OPTkey = 	 {},
  volume = 	 {31},
  number = 	 {3-4},
  pages = 	 {217–237},
  OPTmonth = 	 {},
  OPTnote = 	 {},
  OPTannote = 	 {}
}

@Article{MLH17,
  author = 	 {L. Mohapatra and T.J. Lagny and D. Harbage and P.R. Jelenkovi\'c and J. Kondev},
  title = 	 {The Limiting-Pool Mechanism Fails to Control the Size of Multiple Organelles},
  journal = 	 {Cell Syst.},
  year = 	 {2017},
  OPTkey = 	 {},
  volume = 	 {4},
  number = 	 {5},
  pages = 	 {559-567},
  OPTmonth = 	 {},
  OPTnote = 	 {},
  OPTannote = 	 {}
}

@article{MRR24,
author = {Shane G. McInally  and Alexander J. B. Reading  and Aldric Rosario  and Predrag R. Jelenkovic  and Bruce L. Goode  and Jane Kondev },
title = {Length control emerges from cytoskeletal network geometry},
journal = {Proceedings of the National Academy of Sciences},
volume = {121},
number = {33},
pages = {e2401816121},
year = {2024},
doi = {10.1073/pnas.2401816121},
URL = {https://www.pnas.org/doi/abs/10.1073/pnas.2401816121},
eprint = {https://www.pnas.org/doi/pdf/10.1073/pnas.2401816121},
}

@article{RMJ23, 
title={Universal length fluctuations of actin structures found in cells}, 
url={http://dx.doi.org/10.7554/eLife.91574.1}, DOI={10.7554/elife.91574.1}, 
publisher={eLife Sciences Publications, Ltd}, 
author={Rosario, Aldric and McInally, Shane G. and Jelenkovic, Predrag R. and Goode, Bruce L. and Kondev, Jane}, 
year={2023}, 
month=dec }

@Article{Pit79,
  author = 	 {B. Pittel},
  title = 	 {Closed Exponential Networks of Queues with Saturation: {T}he {J}ackson-Type Stationary Distribution and Its Asymptotic Analysis},
  journal = 	 mathor,
  year = 	 {1979},
  OPTkey = 	 {},
  volume = 	 {4},
  number = 	 {4},
  pages = 	 {357-378},
  OPTmonth = 	 {},
  OPTnote = 	 {},
  OPTannote = 	 {}
}

@Article{BFL22,
  author = 	 {S. Banerjee and D. Freund and T. Lykouris},
  title = 	 {Pricing and Optimization in Shared Vehicle Systems: {A}n Approximation Framework},
  journal = 	 or,
  year = 	 {2022},
  OPTkey = 	 {},
  volume = 	 {70},
  number = 	 {3},
  pages = 	 {1783–1805},
  OPTmonth = 	 {},
  OPTnote = 	 {},
  OPTannote = 	 {}
}

@Article{JSS14,
  author = 	 {K. Johnson and D. Simchi-Levi and P. Sun},
  title = 	 {Analyzing scrip systems},
  journal = 	 or,
  year = 	 {2014},
  OPTkey = 	 {},
  volume = 	 {62},
  number = 	 {3},
  pages = 	 {524–534},
  OPTmonth = 	 {},
  OPTnote = 	 {},
  OPTannote = 	 {}
}

@Article{BDL19,
  author = 	 {A. Braverman and J.G. Dai and X. Liu and L. Yin},
  title = 	 {Empty-car Routing in Ridesharing Systems},
  journal = 	 or,
  year = 	 {2019},
  OPTkey = 	 {},
  volume = 	 {67},
  number = 	 {5},
  pages = 	 {1437–1452},
  OPTmonth = 	 {},
  OPTnote = 	 {},
  OPTannote = 	 {}
}

@Article{BLW22,
  author = 	 {S. Benjaafar and S. Wu and H. Liu and S. Wu},
  title = 	 {Dimensioning On-Demand Vehicle Sharing Systems},
  journal = 	 ms,
  year = 	 {2021},
  OPTkey = 	 {},
  volume = 	 {68},
  number = 	 {2},
  pages = 	 {1218-1232},
  OPTmonth = 	 {},
  OPTnote = 	 {},
  OPTannote = 	 {}
}

@Article{KBD21,
  author = 	 {L. {van Kreveld} and O. Boxma and J.-P. Dorsman and M. Mandjes},
  title = 	 {Scaling limits for closed product-form queueing networks},
  journal = 	 {Perf. Eval.},
  year = 	 {2021},
  OPTkey = 	 {},
  volume = 	 {151},
  OPTnumber = 	 {},
  pages = 	 {10220},
  OPTmonth = 	 {},
  OPTnote = 	 {},
  OPTannote = 	 {}
}

@Article{CLW95b,
  author = 	 {G.L. Choudhury and K.L. Leung and W. Whitt},
  title = 	 {Calculating noprmalization constants of closed queueing netwroks by numerically inverting their generating functions},
  journal = 	 jacm,
  year = 	 {1995},
  OPTkey = 	 {},
  volume = 	 {42},
  number = 	 {5},
  pages = 	 {935-970},
  OPTmonth = 	 {},
  OPTnote = 	 {},
  OPTannote = 	 {}
}

@Article{JKM22,
  author = 	 {P. Jelenkovi\'c and J. Kondev and L. Mohapatra and P. Mom{\v c}ilovi\'c},
  title = 	 {A probabilistic approach to growth networks},
  journal = 	 or,
  year = 	 {2022},
  OPTkey = 	 {},
  volume = 	 {70},
  number = 	 {6},
  pages = 	 {3386-3402},
  OPTmonth = 	 {},
  OPTnote = 	 {},
  OPTannote = 	 {}
}
\end{document}